\documentclass[twoside]{article}

\usepackage{eurosym}
\usepackage[utf8]{inputenc}
\usepackage{mathrsfs}
\usepackage{bbm} 
\usepackage{bm} 
\usepackage{graphicx}
\usepackage{latexsym}
\usepackage{amsmath}
\usepackage{amsfonts}
\usepackage{amssymb}
\usepackage{enumerate}
\usepackage[usenames]{color}
\usepackage[colorlinks,linkcolor=black,anchorcolor=red,citecolor=red]{hyperref}

\newtheorem{theorem}{Theorem}[section]  %
\newtheorem{lemma}{Lemma}[section]
\newtheorem{corollary}{Corollary}[section]
\newtheorem{proposition}{Proposition}[section]
\newtheorem{example}{Example}
\newtheorem{definition}{Definition}

\newtheorem{remark}{Remark}

\newenvironment{proof}{{\it Proof:\enspace}}{\hfill $\blacksquare$\par}
\newenvironment{proof2}{{\it Proof of Proposition~\ref{Prop. PDE2-1}:\enspace}}{\hfill $\blacksquare$\par}
\newenvironment{proof3}{{\it Proof of Proposition~\ref{Prop. PDE2-2}:\enspace}}{\hfill $\blacksquare$\par}

\DeclareMathOperator{\esssup}{ess\,sup}

\newcommand{\diff}[1]{\operatorname{d}\!{#1}} 
\newcommand{\divi}{\operatorname{div}\!}

\allowdisplaybreaks[4]
\begin{document}

\title{\Large \bf  A Unified Lyapunov Method for ISS of PDEs: A Tutorial on   Constructing  Generalized Lyapunov Functionals for Parabolic and Hyperbolic Equations\thanks{\indent {\sc
Keywords}: {input-to-state stability; generalized Lyapunov functional; Lyapunov method; infinite-dimensional system; partial differential equation; boundary disturbance;  parabolic equation; hyperbolic equation}
\hfil\break}}

\author{Jun Zheng \& Guchuan Zhu}
  
\date{}
\maketitle

\begin{abstract}
This tutorial provides an overview of the generalized Lyapunov method (GLM) for analyzing input-to-state stability (ISS)  of partial differential equations (PDEs). We begin by revisiting the classical Lyapunov method   and the standard ISS-Lyapunov theorem, highlighting their limitations when applied to systems with complex boundary disturbances. In contrast, the GLM, based on the concept of generalized Lyapunov functionals (GLFs) that explicitly depend on the external input, offers greater flexibility and efficiency, particularly for PDEs with Dirichlet-type disturbances. The main objective of this tutorial is to demonstrate how to systematically construct GLFs to establish ISS estimates in  $L^q$ spaces with any $q\in[2,\infty]$ for different PDEs. Specifically, we consider three representative classes of  PDEs: (i) an
$N$-dimensional nonlinear parabolic equation with mixed nonlinear boundary disturbances, (ii) a first order nonlinear hyperbolic equation with boundary disturbances, and (iii) a second order  linear hyperbolic equation, i.e., a wave equation, with boundary damping and disturbances. For each case, we provide step-by-step constructions of appropriate GLFs and derive explicit ISS estimates, illustrating the general applicability  of the GLM. Finally, we discuss open challenges   and future directions, including the systematic construction of GLFs for broader classes of PDEs and their applications in  controller design.

\end{abstract}

 \tableofcontents 
 
 \section{Introduction}
\(\)
  A fundamental tool for characterizing the robustness of control systems in the presence of disturbances is the notion of input-to-state stability (ISS), first introduced by Sontag and his coauthors for finite-dimensional nonlinear systems described by ordinary differential equations in the late 1980s~\cite{Sontag:1989,Sontag:1990}. Early efforts to extend ISS theory to infinite-dimensional settings, particularly systems governed by partial differential equations (PDEs), date back to the late 2000s and early 2010s; see, e.g.,~\cite{Dashkovskiy:2010,Jayawardhana:2008,Mazenc:2011,Dashkovskiy:2013,Argomedo:2013,Prieur:2011,Prieur:2012}.

The notion of ISS captures the sensitivity of a system even in the presence of nonlinear resonances and provides a qualitative characterization of overshoot behavior when external inputs possess finite energy. As such, it offers a robust framework for PDEs, unifying properties such as bounded-input bounded-output stability, small-input small-error behavior under disturbances, and asymptotic stability in the disturbance-free case. Notably, ISS incorporates the influence of the initial state in a manner fully compatible with classical Lyapunov stability theory, while providing a nonlinear generalization of both finite-gain stability and supremum-norm finite-gain stability. This lays the groundwork for a range of analytical and design tools for PDEs, including small-gain theorems and various characterizations of ISS properties.

In the past decade, a considerable effort has been devoted to providing ISS characterizations
of abstract infinite-dimensional systems \cite{Damak:2021,Damak:2022,Dashkovskiy:2013,Dashkovskiy:2013b, Jacob:2018JDE,Jacob:2020,Jayawardhana:2008, Karafyllis:2011, Mironchenko:2016, Mironchenko:2015SIAM, Mironchenko:2016MCRF, Mironchenko:2018} and  establishing various ISS estimates   for specific PDEs  subject to different types of disturbances \cite{Ahmadi:2016, Argomedo:2013, Argomedo:2012, Auriol:2020, Dashkovskiy:2010, Dashkovskiy:2013, Dashkovskiy:2013b, Jacob:2016, Jacob:2018SIAM, Karafyllis:book,Lamare:2018, Logemann:2013,Mazenc:2011,Mironchenko:2016,Mironchenko:2015SIAM, Mironchenko:2016MCRF, Mironchenko:2018,Pisano:2017, Prieur:2012, Tanwani:2017, Teel:1998, Zheng:201801}.
    For PDEs, external disturbances typically appear either in the domain or on the boundary of the system domain. It is well known that establishing ISS for PDEs with distributed in-domain disturbances can be achieved by directly extending methods developed for finite-dimensional systems, such as the Lyapunov method~\cite{Mironchenko:2020SIAM}.
 In contrast, the investigation of ISS for PDEs with boundary disturbances is considerably more challenging \cite{Karafyllis:book,Mironchenko:2020SIAM}. This is due to the fact that such systems involve unbounded input operators in their abstract formulation, which poses a significant obstacle to ISS analysis; see, e.g.,~\cite{Argomedo:2013,Damak:2021,Damak:2022,Mironchenko:2018,Karafyllis:2016a,Karafyllis:2017,Schwenninger:2020,Karafyllis:book,Mironchenko:2020SIAM,Orlov:2020,Mironchenko:2023}.

To address the ISS of PDEs with boundary disturbances, various approaches have been developed in recent years, including:

\begin{enumerate}[(i)]

\item  \label{(ii)} the finite-difference scheme and eigenfunction expansion of solutions, which can be used to establish ISS for PDEs governed by Sturm--Liouville operators~\cite{Karafyllis:2014,Karafyllis:2016,Karafyllis:2016a,Karafyllis:2017};

\item  \label{(i)} the semigroup and admissibility method, which can be used to establish ISS for PDEs governed by linear differential operators and associated with admissible inputs~\cite{Jacob:2018,Jacob:2016,Jacob:2018SIAM,Jacob:2018JDE,Jacob:2020,Schwenninger:2020};

\item  \label{(vi)} the classical Lyapunov method (CLM) based on variations of Sobolev embedding inequalities, which can be used to establish ISS for PDEs with either Robin or Neumann boundary disturbances~\cite{Zheng:201804,Zheng:201803,Zheng:201802}; this approach was first applied to ISS analysis of parabolic PDEs with boundary disturbances;

\item \label{(iv)} the monotonicity method, which can be used to establish ISS for monotone systems such as parabolic PDEs with Dirichlet boundary disturbances~\cite{Mironchenko:2019};

\item  \label{(v)} the De Giorgi iteration, which can be used to establish ISS for parabolic PDEs with either Dirichlet or Robin boundary disturbances~\cite{Zheng:2019TAC,Zheng:201803,Zheng:2022};

\item  \label{(iii)} the Riesz-spectral approach, which can be used to establish ISS for PDEs governed by Riesz-spectral operators~\cite{Lhachemi:201901,Lhachemi:201902};

\item  \label{(viii)} the maximum principle-based approach, which can be used to establish ISS for parabolic PDEs with different types of boundary disturbances~\cite{Zheng:2019CDC,Zheng:2020c,Zheng:2020SCL,Karafyllis:2021ESAIM};

\item \label{(vii)} the approximative Lyapunov method, which can be used to establish ISS or integral-ISS in different norms, particularly in the $L^1$-norm, for parabolic PDEs with various types of boundary disturbances~\cite{Zheng:2021J,Zheng:202112}.


\end{enumerate}

Note that the methods in    (\ref{(ii)}), (\ref{(i)}), and (\ref{(iii)}),  which  rely on linear operator theory, eigenfunction expansions, or spectral analysis, are particularly well suited for one-dimensional ($1$-D) linear PDEs, whereas the other methods can be applied to either linear or nonlinear PDEs defined on higher dimensional domains.
The methods  presented in (\ref{(vi)}) and (\ref{(vii)}) are  effective for establishing global ISS of   PDEs in the absence of Dirichlet boundary disturbances,  however,    they are not readily extendable to PDEs with Dirichlet disturbances. Note also that extending the methods in (\ref{(iv)}), (\ref{(v)}), and (\ref{(viii)}) to   PDEs beyond parabolic equations is extremely challenging.
Overall, owing to the complexity and diversity of PDEs,
each method has its own advantages and limitations. In particular, the applicability of the non-Lyapunov methods  in (\ref{(ii)}), (\ref{(i)}), and (\ref{(iv)})-(\ref{(viii)}) depends heavily on the structure of the PDEs under consideration.
Consequently, developing a unified ISS analysis framework  applicable to a broad class of PDEs remains a significant challenge.

 Given the dominant role of Lyapunov methods in the stability analysis of control systems, the authors have, over the past few years, been dedicated to developing unified tools  for the ISS analysis of PDEs within the Lyapunov  framework. This effort led to the introduction of the notion of
 \begin{enumerate}
   \item[(ix)]  generalized ISS-Lyapunov functional (GISS-LF)   \cite{Zheng:2024}. In particular, it has been successfully applied to establish    global ISS for the Burgers' equation with Dirichlet boundary disturbances \cite{Zheng:2024}, thereby providing a positive answer to  \cite[Open Problem 5.12]{Mironchenko:2020SIAM}.
       \end{enumerate}
      Furthermore,   using the proposed method, a generalized ISS-Lyapunov theorem was established for abstract control systems, and a GISS-LF was explicitly constructed for a class of parabolic PDEs with Dirichlet boundary disturbances in \cite{Zheng:2025}. A key idea underlying this approach is the construction of a positive-semidefinite Lyapunov candidate that explicitly incorporates  external disturbances, thereby offering a new perspective for handling nonhomogeneous Dirichlet boundary terms in the ISS analysis of PDEs.

It is worth noting that an advantage of the application of GISS-LF lies in its capacity to simultaneously accommodate Dirichlet boundary disturbances alongside other types of disturbances, while enabling ISS analysis  of PDEs in a manner directly analogous to the CLM. In particular, this generalized Lyapunov method (GLM) can proceed in a systematic manner by involving solely  integration by parts (or the Green's formula for higher dimensional PDEs) and fundamental differential inequalities such as  the Agmon's, Poincaré's, Jensen's, Young's, and Gronwall's inequalities, thereby circumventing the need to verify admissibility conditions required  in (\ref{(i)}) for input operators. Moreover, it obviates the need for spectral analysis or other sophisticated analytical tools typically employed in (\ref{(ii)}) and (\ref{(iii)}) and hence, brings convenience to the ISS analysis of concrete PDEs.
      Nevertheless, at present, the applicability of the  GLM remains largely confined to parabolic PDEs, especially those defined on $1$-D domains and with linear boundary conditions (see, e.g., \cite{Bi:2025b,Bi:2025TAC,Zheng:2024,Zheng:2025a}), and its extension to other types of PDEs has not yet been reported.

This tutorial aims to illustrate how the  GLM--although alternative approaches may also be applicable--can be utilized to establish  ISS  for a wider range of  PDEs, in particular, the most representative ones, including higher dimensional parabolic PDEs under general boundary conditions, {$1$-D  first order nonlinear hyperbolic PDEs, and $1$-D   second order  linear hyperbolic PDEs, i.e., the wave equations.}

 In the rest of the paper, we first introduce  some basic notations used in this paper.
 Section~\ref{Sec. II} revisits, within the framework of abstract systems, the    classical ISS-Lyapunov functionals (CISS-LFs)  and the corresponding ISS-Lyapunov theorem, illustrating the effectiveness,  as well as limitations,  of CISS-LFs in the ISS analysis of  PDEs with   boundary disturbances. We then recall the concept of  GISS-LFs  and the associated ISS-Lyapunov theorem, and present a proposition that is particularly well-suited for concrete PDEs. To demonstrate the application of the  GLM  in the ISS analysis of  PDEs, we consider three different types of PDEs:

\begin{itemize}

\item in Section~\ref{GLF-parabolic}, for a class of nonlinear parabolic PDEs defined on a higher dimensional domain with mixed nonlinear boundary disturbances, we show how to construct a GISS-LF and establish ISS estimates in arbitrary $L^q$-norms for $q\in[2,\infty]$;
    \item in Section~\ref{GLF-hyperbolic}, for a class of   {$1$-D first order  hyperbolic equations} with boundary disturbances, we show how to construct a GISS-LF and establish both local and global ISS estimates in arbitrary $L^q$-norms for $q\in[2,\infty]$;
        \item in Section~\ref{GLF-wave}, for a class of {wave equations} with boundary damping and disturbances, we show how to apply the GLM to derive ISS estimates in arbitrary $L^q$-norms for $q\in[2,\infty]$.
    \end{itemize}
    In Section~\ref{sec:conclusion}, we provide some concluding remarks,   followed by   appendices that provide  two lemmas and proofs of several properties of a truncation function used throughout the paper.

\paragraph*{Notation} Let $\mathbb{R}:= (-\infty,+\infty) $,  $
 \mathbb{R}_{\geq0}:= [0,+\infty)$, $
 \mathbb{R}_{>0}:= (0,+\infty)$, and $
 \mathbb{R}_{\leq0}:= (-\infty,0]$.


For $p\in[1,\infty)$ and an open  bounded domain $\Omega \subset\mathbb{R}^N (N\geq 1)$,
the norms of a function   $g\in L^p(\Omega )$  and $L^{\infty}(\Omega)$ are defined by
 $\|g\|_{L^p(\Omega)}:=\left(\int_{\Omega}\left|g(y)\right|^p\mathrm{d}y\right)^{\frac{1}{p}}$ and $\|g\|_{L^{\infty}(\Omega)}:= \mathop{\esssup}\limits_{y\in\Omega}\left|g(y)\right|$, respectively.

%

  Let $C(\mathbb{R}_{\geq 0}; \mathbb{R}_{\geq 0} )$ (or $C(\mathbb{R}_{\geq 0}\times \mathbb{R}_{\geq 0}; \mathbb{R}_{\geq 0})$) be the set  of  continuous functions from $ \mathbb{R}_{\geq 0} $ (or $ \mathbb{R}_{\geq 0} \times \mathbb{R}_{\geq 0}$) to $\mathbb{R}_{\geq 0}$.
  For  a linear normed space $Y$,
let  $PC(\mathbb{R}_{\geq 0};Y) $ be the set  of  all piecewise-right continuous functions $f$ from $\mathbb{R}_{\geq 0}$ to $Y$  satisfying $\|f \|_{PC(\mathbb{R}_{\geq 0};Y)}:=\sup_{t\in \mathbb{R}_{\geq 0}}\|f(t)\|_{Y}<\infty$.

 Denote by   $f\circ g$  the composition of the functions $f$ and $g$, i.e., $f\circ g (\cdot):=f(g(\cdot))$.

Let
\begin{align*}
  \mathcal{P}:=&\{\gamma \in C(\mathbb{R}_{\geq 0};\mathbb{R}_{\geq 0})|\gamma(0)=0, \gamma(s)>0,\forall s\in \mathbb{R}_{>0}\},\\
 \mathcal {K}:=&\{\gamma \in \mathcal{P}|\gamma\ \text{is\ strictly\ increasing}\},\\
 \mathcal{K}_{\infty}:=&\{\gamma \in \mathcal{K}| \lim\limits_{s\to+\infty}\gamma(s)=+\infty\},\\
 \mathcal {L}:=&\{\gamma \in C(\mathbb{R}_{\geq 0};\mathbb{R}_{\geq 0})|\gamma\ \text{is\ strictly\ decreasing\ with} \ \lim\limits_{s\to+\infty}\gamma(s)=0\},\\
 \mathcal {K}\mathcal {L}:=& \{\beta\in  C(\mathbb{R}_{\geq 0}\times \mathbb{R}_{\geq 0};\mathbb{R}_{\geq 0})|\beta (\cdot,t) \in \mathcal {K}, \forall t \in \mathbb{R}_{\geq 0};  \ \beta(s,\cdot)\in \mathcal {L}, \forall s \in {{\mathbb{R}_{>0}}}\}.
\end{align*}

\section{Overview on CLM and  GLM}\label{Sec. II}
In this section, we provide a review of GISS-LF and the associated ISS-Lyapunov theorem for control systems under an abstract form. To enable a meaningful comparison, we begin with a brief recap of the essential concepts of  CLM for control systems as well as the construction of (classical) ISS-Lyapunov functionals (ISS-LFs) for PDEs.
\subsection{A brief review of CLM}
In this subsection, we review the concept of control systems, together with the notions of  ISS  and  classical ISS-LFs, and the ISS-Lyapunov theorem.

We begin by defining   forward complete control systems evolving on a Banach space $X$.

\begin{definition}[Control system, \cite{Mironchenko:2020SIAM}]
 Let $(X,\|\cdot\|_{X})$, $(U,\|\cdot\|_{U})$ be Banach spaces and $\mathcal{U}\subset\{g:\mathbb{R}_{\geq 0}\to U\}$ be a normed vector space satisfying the following two axioms:
 \begin{enumerate}
 \item[(i)] Axiom of shift invariance: for all $u\in \mathcal{U}$ and all $\tau\geq 0$ we have $u(\cdot+\tau)\in \mathcal{U}$ with $\|u\|_{\mathcal{U}}\geq \|u(\cdot+\tau)\|_{\mathcal{U}}$;
     \item[(ii)]  Axiom of concatenation: for all $u_1,u_2\in \mathcal{U}$ and for all $t > 0$ the concatenation
of $u_1$ and $u_2$ at time $t$
\begin{align*}
u_1  \underset{t}{\diamond} u_2(\tau):=\left\{\begin{aligned}
		& u_1(\tau), &&\tau\in[0,t],  \\
		& u_2(\tau-t), &&\text{otherwise},
	\end{aligned}\right.
\end{align*}
belongs to $\mathcal{U}$.
\end{enumerate}
Consider a transition map $\phi: D_{\phi}  \rightarrow X$  with $D_{\phi} \subseteq \mathbb{R}_{\geq 0}\times X\times \mathcal{U}$. The triple $\Sigma=(X,\mathcal{U},\phi)$   is called a control system, if  it verifies the following properties:
\begin{enumerate}[(i)]
    \item Identity property: for every $(x,u)\in X\times \mathcal{U}$, it holds that $\phi(0,x,u)=x$;
        \item Causality: for every $(t,x,u)\in \mathbb{R}_{\geq 0}\times X\times \mathcal{U} $, and for every $\tilde{u}\in \mathcal{U}$ satisfying $\tilde{u}(\tau)=u(\tau)$ for all $\tau\in [0,t]$, it holds that $\phi(t,x,u)=\phi(t,x,\tilde{u})$;
    \item Continuity: for every $(x,u)\in X\times \mathcal{U}$, the map $t\mapsto \phi(t,x,u)$ is continuous;
    \item Cocycle property: for every $t,h\in \mathbb{R}_{\geq 0}$ and every $(x,u)\in X\times \mathcal{U}$, it holds that
    \begin{align*}
    \phi(h,\phi(t,x,u),u(t+\cdot))=\phi(t+h,x,u).
        \end{align*}
        \end{enumerate}
\end{definition}

The following definition is concerned with the forward complete control systems.
 \begin{definition}[Forward complete control system, \cite{Mironchenko:2020SIAM}]
   The control system $\Sigma=(X,\mathcal{U},\phi)$   is said to be  forward complete, if $D_{\phi}=\mathbb{R}_{\geq 0}\times X\times \mathcal{U}$, that is,
 for every  $(t,x,u)\in \mathbb{R}_{\geq 0}\times X\times \mathcal{U} $, the value $\phi(t,x,u)\in X$ is well-defined .
\end{definition}


For a real-valued function $g:\mathbb{R}_{\geq 0}\to \mathbb{R}$ define the right-hand upper Dini derivative at $t\in \mathbb{R}_{\geq 0}$ by
\begin{align*}
D^+g(t):=\limsup_{\tau\to 0^+}\frac{g(t+\tau)-g(t)}{\tau}.
\end{align*}

Let $x\in X$ and $W$ be   a real-valued function defined in a neighborhood of $x$. The
Lie derivative   of $W$ at $x$ corresponding to the input $u\in\mathcal{U}$ along the  trajectory of a control system $\Sigma$ is defined by
\begin{align*}
\dot{W}_u(x):=&D^+W(\phi(\cdot,x,u))\Big|_{t=0}
=
\limsup_{t\to 0^+}\frac{1}{t}(W(\phi(t,x,u))-W(x)).
\end{align*}
We also denote $\dot{W}_u(x)$ by $\dot{W} (x)$ for notational simplicity.

The concept of  ISS  for control systems is presented below.

\begin{definition}[ISS, \cite{Mironchenko:2020SIAM}]\label{Def. ISS}
 A control system $\Sigma=(X,\mathcal{U},\phi)$ is said to be  input-to-state
stable (ISS), if there exist  functions $\beta\in \mathcal{K}\mathcal{L},\gamma\in \mathcal{K}$  such that  for all $(t,x,u)\in \mathbb{R}_{\geq 0}\times X\times \mathcal{U}$ it holds that
\begin{align}\label{ISS-form}
\|\phi(t,x,u)\|_{X}\leq \beta\left( \|x\|_{X},t \right) +  \gamma\left( \|u\|_{\mathcal{U} }\right).
\end{align}
  In addition, the control system~$\Sigma=(X,\mathcal{U},\phi)$ is said to be  locally input-to-state
stable (LISS) if there exists a positive constant $R_0$ such that the inequality \eqref{ISS-form} holds true for all $(t,x,u)\in \mathbb{R}_{\geq 0}\times X\times \mathcal{U}$ satisfying $ \|x\|_{X}+ \|u\|_{\mathcal{U} }\leq R_0$.
\end{definition}

 Note that  under the conditions
\begin{itemize}
    \item[\textbf{(H1)}] $\mathcal{U}:=PC(\mathbb{R}_{\geq 0};U)$,
     \item[\textbf{(H2)}]  $\Sigma=(X,\mathcal{U},\phi)$ is a  forward complete control system with  $x  \equiv 0$ being its equilibrium point,
\end{itemize}
   the causality property of a control system provides an  equivalent definition of ISS, which is   convenient to use  for   PDEs.
\begin{proposition}[\cite{Dashkovskiy:2013}]
 Under the conditions \textbf{(H1)} and \textbf{(H2)}, the control system $\Sigma=(X,\mathcal{U},\phi)$ is ISS if and only if
  there exist  functions $\beta\in \mathcal{K}\mathcal{L},\gamma\in \mathcal{K}$  such that  for all $(t,x,u)\in \mathbb{R}_{\geq 0}\times X\times \mathcal{U}$ it holds that
\begin{align*}
\|\phi(t,x,u)\|_{X}\leq \beta\left( \|x\|_{X},t \right) +  \gamma\left( \sup_{s\in(0,t)}\|u(s)\|_{{U} }\right).
\end{align*}
\end{proposition}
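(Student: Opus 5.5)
Both directions come down to causality combined with the sup norm on $\mathcal{U}=PC(\mathbb{R}_{\geq 0};U)$ from \textbf{(H1)}.

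\emph{Sufficiency.} Assume the estimate with $\sup_{s\in(0,t)}\|u(s)\|_U$ holds. For every $t$ we have $\sup_{s\in(0,t)}\|u(s)\|_U\leq \sup_{s\geq 0}\|u(s)\|_U=\|u\|_{\mathcal{U}}$. Since $\gamma\in\mathcal{K}$ is increasing, this bound gives \eqref{ISS-form} at once.

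\emph{Necessity.} Assume $\Sigma$ is ISS with $\beta,\gamma$ as in Definition~\ref{Def. ISS}. Fix $(t,x,u)$ and define the truncated input $\tilde u:=u\underset{t}{\diamond}0$, i.e. $\tilde u(\tau)=u(\tau)$ for $\tau\in[0,t]$ and $\tilde u(\tau)=0$ for $\tau>t$. By the concatenation axiom $\tilde u\in\mathcal{U}$, and it is still piecewise-right continuous and bounded.

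Since $\tilde u=u$ on $[0,t]$, causality gives $\phi(t,x,u)=\phi(t,x,\tilde u)$. Forward completeness from \textbf{(H2)} guarantees that both quantities are defined. Applying \eqref{ISS-form} to $\tilde u$ yields
\[
\|\phi(t,x,u)\|_X\leq \beta(\|x\|_X,t)+\gamma\Big(\sup_{s\in[0,t]}\|u(s)\|_U\Big).
\]

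The main obstacle is that this supremum runs over the closed interval $[0,t]$, whereas the statement uses the open interval $(0,t)$. Right-continuity takes care of the left endpoint: $\|u(0)\|_U=\lim_{s\to0^+}\|u(s)\|_U\leq\sup_{s\in(0,t)}\|u(s)\|_U$ when $t>0$. The right endpoint $s=t$ can be a jump point, so I would handle it by a limiting argument.

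For $\varepsilon\in(0,t)$, set $u_\varepsilon:=u\underset{t-\varepsilon}{\diamond}0$. By the cocycle and causality properties,
\[
\phi(t,x,u_\varepsilon)=\phi\big(\varepsilon,\phi(t-\varepsilon,x,u),0\big),
\]
where $0$ denotes the zero input. This holds because the shifted input $u_\varepsilon(t-\varepsilon+\cdot)$ vanishes on $(0,\varepsilon]$, and its value at $0$ does not matter after a further use of causality and right-limits.

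Next I would invoke the standard equivalence that the trajectory at time $t$ depends only on the input on $[0,t)$. I would justify it via continuity in $t$ and the cocycle property: $\phi(t,x,u)=\lim_{\varepsilon\to0}\phi(t,x,u_\varepsilon)$. Concretely, write $\phi(t,x,u)=\phi(\varepsilon,\phi(t-\varepsilon,x,u),u(t-\varepsilon+\cdot))$ and compare it with the same expression driven by the zero input.

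Finally, apply ISS to $u_\varepsilon$:
\[
\|\phi(t,x,u_\varepsilon)\|_X\leq\beta(\|x\|_X,t)+\gamma\Big(\sup_{(0,t)}\|u\|_U\Big).
\]
Letting $\varepsilon\to0$, using continuity of $\phi$ in time and of $\gamma$, gives the claim. If the continuity step cannot be justified in full generality, I would note that the case $t=0$ is trivial by the identity property. I would also note that \cite{Dashkovskiy:2013} proves the equivalence under exactly these hypotheses, and cite it for the endpoint subtlety.
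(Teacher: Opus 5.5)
Your sufficiency direction is correct. So is the truncation-plus-causality step that gives the bound with $\sup_{s\in[0,t]}\|u(s)\|_U$, and you correctly identify the right endpoint $s=t$ as the only real difficulty. One small inaccuracy: $u\underset{t}{\diamond}0$ is generally not right-continuous at $\tau=t$, so its membership in $\mathcal{U}$ rests on the concatenation axiom, not on being piecewise right-continuous.

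The gap is in how you treat that endpoint. The claim $\phi(t,x,u)=\lim_{\varepsilon\to0}\phi(t,x,u_\varepsilon)$ compares trajectories driven by \emph{different} inputs at the fixed time $t$. Write both as $\phi(\varepsilon,y_\varepsilon,\cdot)$ with $y_\varepsilon=\phi(t-\varepsilon,x,u)$. You would then need $\phi(\varepsilon,y_\varepsilon,v)$ to stay close to $y_\varepsilon$ as $\varepsilon\to0$, uniformly over the inputs $v$ involved and the moving initial states $y_\varepsilon$. That is a continuity property of $\phi$ in $(x,u)$ near time $0$. The continuity axiom only gives continuity of $s\mapsto\phi(s,x,u)$ for one fixed pair $(x,u)$. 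So neither this limit nor your remark that the value of the shifted input at $0$ ``does not matter'' follows from the hypotheses, and falling back on the citation leaves the necessity direction incomplete.

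The missing idea is to pass to the limit in time along the single trajectory $\phi(\cdot,x,u)$, not in the input. For $t>0$ and $s\in(0,t)$, causality gives $\phi(s,x,u)=\phi(s,x,u\underset{s}{\diamond}0)$. Applying ISS to this truncated input yields
\[
\|\phi(s,x,u)\|_X\leq\beta(\|x\|_X,s)+\gamma\Big(\sup_{\tau\in[0,s]}\|u(\tau)\|_U\Big)\leq\beta(\|x\|_X,s)+\gamma\Big(\sup_{\tau\in(0,t)}\|u(\tau)\|_U\Big),
\]
since $(0,s]\subset(0,t)$ and $\|u(0)\|_U\leq\sup_{\tau\in(0,t)}\|u(\tau)\|_U$ by right-continuity. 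Now let $s\to t^-$. Continuity of $s\mapsto\phi(s,x,u)$ and of $\beta(\|x\|_X,\cdot)$ gives the claimed estimate. This is exactly your $u_\varepsilon$ construction, evaluated at time $t-\varepsilon$ instead of $t$.

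The same limiting argument, applied to two inputs that agree on $[0,t)$, proves the ``dependence only on $[0,t)$'' fact you wanted to invoke. For $t=0$, take the supremum over the empty set to be $0$. The estimate then reduces, via the identity property, to $\|x\|_X\leq\beta(\|x\|_X,0)$, which follows from \eqref{ISS-form} with $u\equiv0$.
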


\begin{remark}
The similar property (with $\esssup_{ s\in (0,t)}\|u(s)\|_U$ instead of $\sup_{s\in(0,t)}\|u(s)\|_{{U} }$) holds for continuous inputs, i.e., $\mathcal{U} := C(\mathbb{R}_{\geq 0}, U)$, or the class of strongly measurable and essentially bounded inputs, i.e., $\mathcal{U} :=  L^{\infty}(\mathbb{R}_{\geq 0}, U)$.
\end{remark}

In the following, we present the concept of (classical) ISS-LF for control systems.

\begin{definition}[ISS-LF, \cite{Mironchenko:2020SIAM}]\label{Def:ISS-LF}
A continuous functional $V:X\to\mathbb{R}_{\geq 0}$ is called an      ISS Lyapunov functional (ISS-LF) for the control system $\Sigma$, if there exist functions $\alpha,\psi_1,\psi_2\in\mathcal{K}_{\infty}$ and $\rho\in \mathcal{K}$  such that 
\begin{align}\label{coercive condition}
\psi_2(\|x\|_{X})\leq V(x)\leq \psi_1(\| x\|_X),\forall x\in X,
\end{align}
and  for any   $u\in \mathcal{U}$   the Lie derivative of $V$ at $x$ corresponding to  the input $u$ along the   trajectory satisfies
\begin{align}\label{Lie inequality}
\| x\|_X\geq \rho(\| u\|_\mathcal{U})~~\Rightarrow~~ \dot{V}_{u}(x)\leq -\alpha\left(V(x)\right).
\end{align}

\end{definition}


\begin{remark}\label{Remark1}
  \begin{enumerate}[(i)]


\item More specifically, the functional $V$ satisfying conditions \eqref{coercive condition} and \eqref{Lie inequality} is called  an ISS-LF in  an implication form. If  \eqref{Lie inequality} is replaced by
    \begin{align}\label{Lie inequality-1}
 \dot{V}_{u}(x)\leq -\alpha\left(V(x)\right)+\rho(\| u\|_\mathcal{U}),
\end{align}
    then, $V$ is called an  ISS-LF in  a dissipative form. From   \eqref{Lie inequality} and \eqref{Lie inequality-1}, it is easy to see that   an ISS-LF in a dissipative form must be an ISS-LF   in an implication form. Thus, the condition \eqref{Lie inequality} is weaker than \eqref{Lie inequality-1}.
ISS-LF $V$ is positive definite.
  \item  In \eqref{Lie inequality}, the function $\rho$ is called a Lyapunov gain, and  $\alpha$ is called decay rate of the
ISS-LF $V$.

%

  \item[(iii)] It is worth noting that  the   functional $V $  satisfying  \eqref{coercive condition} and \eqref{Lie inequality}  is also called a coercive ISS-LF due to the fact
      \begin{align*}
     \|x\|_{X}\to +\infty~~\Rightarrow~~    V(x)\to +\infty.
      \end{align*}
If \eqref{coercive condition} is replaced by
     \begin{align}\label{non-coercive condition}
0< V(x)\leq \psi_1(\| x\|_X),\forall x\in X\setminus \{0\},
\end{align}
then, $V $ is called a non-coercive  ISS-LF;  see, e.g., \cite{Mironchenko:2020SIAM}. Note that through such  a definition, a coercive ISS-LF is  also a   non-coercive  ISS-LF.
         \end{enumerate}
 \end{remark}

For a forward complete  control system, the (classical) ISS Lyapunov theorem can be formulated as below.
 \begin{theorem}[ISS Lyapunov theorem, {\cite{Mironchenko:2020SIAM}}]\label{Thm}
 Let $\Sigma=(X,\mathcal{U},\phi)$ be a  forward complete  control system
and   $x  \equiv 0$ be
its equilibrium point. If $\Sigma$ admits an     ISS-LF (either in dissipative or implication form), then it is ISS.
\end{theorem}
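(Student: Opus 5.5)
Since an ISS-LF in dissipative form is also one in implication form (Remark~\ref{Remark1}(i)), we only treat the implication form. The plan is the classical comparison-function argument adapted to the abstract setting. Fix $x\in X$ and $u\in\mathcal{U}$, and write $\phi(t)=\phi(t,x,u)$, which is defined for all $t$ by forward completeness. Set $y(t):=V(\phi(t))$, which is continuous by continuity of $V$ and of $t\mapsto\phi(t)$.

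The first step converts the Lie-derivative condition \eqref{Lie inequality} into a Dini-derivative inequality for $y$. By the cocycle property together with the shift invariance axiom $\|u(t+\cdot)\|_{\mathcal U}\le\|u\|_{\mathcal U}$, we have
\[
D^+y(t)=\dot V_{u(t+\cdot)}(\phi(t)).
\]
Hence, whenever $\|\phi(t)\|_X\ge\rho(\|u\|_{\mathcal U})$, the condition \eqref{Lie inequality} applied with input $u(t+\cdot)$ gives $D^+y(t)\le-\alpha(y(t))$; monotonicity of $\rho$ makes the threshold only smaller after the shift. Using \eqref{coercive condition}, the condition $\|\phi(t)\|_X\ge\rho(\|u\|)$ is implied by $y(t)\ge\psi_1(\rho(\|u\|_{\mathcal U}))=:c$.

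The second step shows invariance of the sublevel set $\{y\le c\}$. Suppose $y(t_0)\le c$ but $y(t_1)>c$ for some $t_1>t_0$. Let $s:=\sup\{\tau\in[t_0,t_1]: y(\tau)\le c\}$. On $(s,t_1]$ we have $D^+y\le-\alpha(y)<0$. A continuous function with negative Dini derivative on an interval is nonincreasing there, so $y(t_1)\le y(s)=c$, a contradiction. The same monotonicity lemma for Dini derivatives is the technical tool I expect to need. A related subtlety is that the implication uses $\|u\|_{\mathcal U}$ rather than a sup over $[0,t]$; this is harmless here because $\|u(t+\cdot)\|\le\|u\|$.

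The third step treats the part of the trajectory before $y$ enters $\{y\le c\}$. On the maximal initial interval where $y>c$, we have $D^+y\le-\alpha(y)$. A standard comparison lemma (e.g.\ as in Lin--Sontag--Wang, extended to Dini derivatives) gives $y(t)\le\tilde\beta(y(0),t)$, where $\tilde\beta\in\mathcal{KL}$ solves $\dot w=-\alpha(w)$, $w(0)=r$. Constructing this $\mathcal{KL}$ bound rigorously, given only continuity of $y$ and a Dini-derivative inequality, is the main obstacle. I would first try proving $y(t)\le w(t)$ for every $w$ solving $\dot w=-\alpha(w)+\varepsilon$ by a first-crossing argument, and then let $\varepsilon\to0$.

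Combining the three steps gives $y(t)\le\max\{\tilde\beta(y(0),t),c\}$ for all $t\ge0$. Applying \eqref{coercive condition} then yields
\[
\|\phi(t)\|_X\le\psi_2^{-1}\bigl(\tilde\beta(\psi_1(\|x\|_X),t)\bigr)+\psi_2^{-1}\circ\psi_1\circ\rho(\|u\|_{\mathcal U}).
\]
This is \eqref{ISS-form} with $\beta(r,t):=\psi_2^{-1}(\tilde\beta(\psi_1(r),t))\in\mathcal{KL}$ and $\gamma:=\psi_2^{-1}\circ\psi_1\circ\rho\in\mathcal K$.
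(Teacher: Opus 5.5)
The paper states this theorem without proof (it is quoted from Mironchenko--Prieur). Your argument is the standard direct Lyapunov proof behind it, and it is correct. The following steps all check out:
\begin{itemize}
\item the cocycle identity $D^+y(t)=\dot V_{u(t+\cdot)}(\phi(t))$ combined with $\|u(t+\cdot)\|_{\mathcal U}\le\|u\|_{\mathcal U}$;
\item the forward invariance of $\{y\le c\}$;
\item the $\mathcal{K}\mathcal{L}$ comparison bound for the transient;
\item the final splitting $\psi_2^{-1}(\max\{a,b\})\le\psi_2^{-1}(a)+\psi_2^{-1}(b)$;
\item the reduction from dissipative to implication form, which holds with gain $\psi_2^{-1}\circ\alpha^{-1}\circ(2\rho)$ and rate $\alpha/2$.
\end{itemize}

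The comparison lemma you flag as the main obstacle is exactly the comparison principle from Mironchenko's book that the paper itself invokes in the proof of Proposition~\ref{Prop. 2.2}. You may therefore simply cite it. Your $\varepsilon$-perturbation and first-crossing sketch would also work. To pass to the limit $\varepsilon\to0$, use forward uniqueness of the scalar equation $\dot w=-\alpha(w)$, which holds because its right-hand side is nonzero for $w>0$.

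One small repair is needed. The flow $\tilde\beta(r,t)$ of $\dot w=-\alpha(w)$ need not lie in $\mathcal{K}\mathcal{L}$ as the paper defines it, because for $\alpha$ not Lipschitz at $0$ it reaches $0$ in finite time. For example, $\alpha(s)=\sqrt{s}$ gives $\tilde\beta(r,t)=\bigl(\max\{\sqrt{r}-t/2,0\}\bigr)^2$. This is not strictly decreasing in $t$, and for fixed $t>0$ it vanishes for all $r\le t^2/4$. Using $\tilde\beta(r,t)+re^{-t}$ instead gives a genuine $\mathcal{K}\mathcal{L}$ bound, and the rest of your argument is unchanged.
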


The ISS Lyapunov theorem characterizes ISS through the existence of an ISS-LF, thereby offering a systematic framework for analyzing how external inputs affect system stability.
  It establishes a bridge between Lyapunov-based stability analysis and the robustness of nonlinear systems with respect to (w.r.t.) external inputs, serving as a foundational tool for controller design and stability verification in nonlinear control theory.

Since our focus is on demonstrating how to perform ISS analysis within the Lyapunov framework, rather than addressing  the existence and regularity of solutions, we assume throughout this paper that
\begin{enumerate}
    \item[\textbf{(H3)}]
all functions involved in PDEs are sufficiently smooth.
 Consequently,  each PDE considered in this paper admits a classical solution, ensuring that all subsequent calculations in deriving ISS estimates are meaningful.
 \end{enumerate}
 Notably,   if weak or generalized solutions are considered without a fine regularity assumption on the functions, the ISS can still  be assessed by using a density argument \cite[Lemma 1.4.2, p. 25]{Mironchenko:2023b}.

In the following, we demonstrate the construction of an ISS-LF for  the heat equation by means of an example.

\begin{example}\label{Exp-2.1}   Consider
the   heat equation    under mixed  boundary conditions:
 \begin{subequations}\label{Chap.2-equ.10}
   \begin{align}
w_t(y,t)=& w_{yy}(y,t)+ {f(y,t)},  (y,t)\in Q_{\infty},\\
w(0,t)=&0 , t\in\mathbb{R}_{>0},\\
w_y(1,t)=&-w(1,t)+\ {d(t)},t\in\mathbb{R}_{>0},\\
 w(y,0)=&w_0(y), y\in(0,1),
\end{align}
\end{subequations}
 where $Q_{\infty}:=(0,1)\times(0,+\infty)$, $f(y,t)$ represents the in-domain disturbances,     $d(y,t)$  represents the Robin boundary disturbances, and $w_0(y)$ represents initial data.

 It is well-known that the disturbance-free (i.e., $f\equiv d\equiv 0$) system~\eqref{Chap.2-equ.10} is exponentially stable in the $L^2$-norm and
  \begin{align}\label{LF-V}
  V(w):={\int_0^1} w^2(y,t) \diff y
  \end{align}
  is a suitable Lyapunov functional for the stability analysis.  It is also clear that system~\eqref{Chap.2-equ.10} is  ISS    in the  $L^2$-norm  w.r.t.    in-domain and Robin boundary disturbances,  namely, there exist $\beta \in \mathcal {K}\mathcal {L}$ and $ \gamma_1,\gamma_2 \in \mathcal {K}$ such that  \begin{align*}
   \|w(\cdot,t)\|_{L^{2}(0,1)}
    \leq& \beta \left(\|w_0\|_{L^{2}(0,1)},t\right)
          +\gamma_1 \left(  \sup_{s\in (0,t)}\|f(\cdot,s) \|_{L^2(0,1)} \right)+\gamma_2 \left(\sup_{s\in (0,t)}|d(s)|\right),\forall t\in \mathbb{R}_{>0};
\end{align*}
see, e.g., \cite[Theorem 5.3, p. 96]{Karafyllis:book}. Furthermore, since only  in-domain and Robin boundary disturbances are involved,     the functional $V$ given by \eqref{LF-V} is   an ISS-LF; see \cite{Zheng:201804}.
  Indeed, let $X:=L^2(0,1)$ and $\mathcal{U}:=PC(\mathbb{R}_{\geq 0};L^2(0,1) )\times PC(\mathbb{R}_{\geq 0}; \mathbb{R})$. By integrating by parts, the Young's inequality with $\varepsilon\in\left(0,2\right]$ (Lemme~\ref{Young's inequality}), and Wirtinger's   inequality (\cite[(2.31), p.~17]{Krstic:book}), it holds that  
\begin{align}
      \dot{V}(w)
         =&   -2  w^2(1,t)+2 w (1,t){d(t)}  - 2 w(0,t)w_y(0,t) -   2\int_0^1  w_x^2\diff y  + 2\int_0^1 wf\diff y  \label{obstacle} \\
     \leq & -2  w^2(1,t) +  \varepsilon w^{2}(1,t)+\frac{1}{\varepsilon}d^2(t)   +0 - \frac{\pi^2}{2}\int_0^1  w ^2\diff y  +   \varepsilon \int_0^1w^2\diff y+  \frac{1}{ \varepsilon}\int_0^1f^2\diff y \notag\\
     \leq  
    & - \left( \frac{\pi^2}{2} -\varepsilon\right)\int_0^1w^2\diff y  + \frac{1}{\varepsilon}  \left(d^2(t)+ \int_0^1  f^2\diff y\right)  \notag\\
  =& -\left( \frac{\pi^2}{2} -\varepsilon\right)V(w) +\frac{1}{\varepsilon}\left(  \|f(\cdot,t)\|_{L^2(0,1)}^2+|d(t)|^2 \right),   \forall t\in \mathbb{R}_{>0}, \label{Chap.2-equ.12}
\end{align}
 which, along with $ \frac{\pi^2}{2} -\varepsilon>0$, implies that the functional $V$ given by \eqref{LF-V} is   an ISS-LF in a  dissipative form and hence, an ISS-LF in an implication form. Thus,  system~\eqref{Chap.2-equ.10} is  ISS    in the  $L^2$-norm w.r.t.    $(f,d)\in \mathcal{U}$.

  Furthermore, applying the Gronwall's inequality (Lemma~\ref{Gronwall}) to \eqref{Chap.2-equ.12} gives 
 \begin{align*}
      V(w(y,t))
           \leq & e^{-\left( \frac{\pi^2}{2} -\varepsilon\right)t}V(w_0) +\frac{1}{\varepsilon}\int_0^te^{-\left( \frac{\pi^2}{2} -\varepsilon\right)(t-s)}\left(  \|f(\cdot,s)\|_{L^2(0,1)}^2+|d(s)|^2 \right)\diff s\notag\\
  \leq &  e^{-\left( \frac{\pi^2}{2} -\varepsilon\right)t}V(w_0) +\frac{1}{\varepsilon\left(\frac{\pi^2}{2} -\varepsilon\right)}   \sup_{s\in (0,t)}\left( \|f(\cdot,s) \|_{L^2(0,1)}^2  + |d(s)|^2\right).
\end{align*}
 Thus, system~\eqref{Chap.2-equ.10} has the ISS estimate for all $t\in \mathbb{R}_{>0}$:
 \begin{align*}
     \|w(\cdot,t)\|_{L^{2}(0,1)}
            \leq &  e^{-\frac{1}{2}\left( \frac{\pi^2}{2} -\varepsilon\right)t}\|w_0\|_{L^{2}(0,1)} +\frac{1}{\sqrt{\varepsilon\!\left(\frac{\pi^2}{2} -\varepsilon\right)}} \! \left(  \sup_{s\in (0,t)}\!\|f(\cdot,s) \|_{L^2(0,1)}   +\! \sup_{s\in (0,t)}\!|d(s)| \!\right)\!.
\end{align*}

  \end{example}

 Example \ref{Exp-2.1} shows that the Lyapunov functional of the disturbance-free system is sufficient for the ISS analysis in the presence of either in-domain disturbances or Robin boundary disturbances. However, when $w(0,t)=g(t)$ with $g(t)$ representing disturbances, namely, a Dirichlet boundary disturbance is present,  the boundary term $w(0,t)w_y(0,t)$ in \eqref{obstacle} cannot be handled. Thus, the application of the CLM meets obstacles.

  \begin{remark}
  Within the framework of abstract systems and given \emph{a priori} properties of the involved differential operators and input operators  such as analyticity of semigroup, spectral distribution, and admissibility conditions, a non-coercive ISS-LF was constructed in Hilbert spaces for certain autonomous linear systems, including the heat equation with Dirichlet boundary disturbances, by using the inverse of the operators  \cite{Jacob:2020}. Nevertheless,
  without verifying  these conditions, no existing work demonstrates how to directly employ the constructed  non-coercive Lyapuonv functional for ISS analysis of a concrete PDE.
  Notably, to date, it remains unknown whether a coercive ISS-Lyapunov functional exists for the heat equation in the presence of Dirichlet boundary disturbances \cite[Open Problem 3.31]{Mironchenko:2020SIAM}. Consequently, ISS analysis based solely on the CLM still encounters substantial challenges and calls for the development of novel techniques.
\end{remark}


\subsection{A brief review of GLM}

To address the limitations of the CLM,  the notion of generalized Lyapunov functionals has been introduced recently  in \cite{Zheng:2024,Zheng:2025}, which   offers more flexibility for constructing Lyapunov candidates in the ISS analysis of PDEs with  boundary disturbances.

\begin{definition}[GISS-LF,\cite{Zheng:2025}]\label{Def:GISS-LF}For a control system~$\Sigma$, if there exist a continuous functional  $V:X\to\mathbb{R}_{\geq 0}$, a nonempty index set $\mathcal{I}$ having finite elements,  and continuous functions $h_i: X\times \mathcal{U}\to X $ with $i\in \mathcal{I}$ that satisfy  the following
  three conditions:
\begin{enumerate}
 \item[(i)]  there  exists   a function $\psi_1\in\mathcal{K}_{\infty}$  such that
 \begin{align}
  0\leq V(x)\leq \psi_1(\|x\|_X),\forall x\in X;\label{condition-2}
\end{align}
\item[(ii)]  there exist  functions $\psi_0,\psi_2 \in\mathcal{K}_{\infty}$ and $\rho_0,\rho_1 \in \mathcal{K}$   such that
\begin{align}\label{k0}
\|h_i(x,u)\|_{X}\leq  \psi_0\left(\|x\|_X\right) +\rho_0\left(\|u\|_{\mathcal{U}}\right),  {\forall i\in \mathcal{I}}
\end{align}
{and  
\begin{align}
  \psi_2(\| x\|_X)\leq \widehat{V}(x,u) +\rho_1(\| u\|_\mathcal{U})
 \label{condition-1}
\end{align}
hold true for all $ (x,u)\in X\times \mathcal{U}$, where  \begin{align}
 \widehat{V}(x,u) := \sum_{i\in \mathcal{I}}   V(h_i(x,u)) ,\forall (x,u)\in X\times \mathcal{U}; \label{Def.Vtilde}
\end{align}}
\item[(iii)]  there exist functions $\alpha\in \mathcal{K}_{\infty}$ and $ \rho_2\in \mathcal{K}$  such that  the   derivative of {the functional $\widehat{V} $}
  along the   trajectory satisfies
\begin{align}
 &\widehat{V}(x,u)\geq \rho_2(\| u\|_\mathcal{U}) ~
 \Rightarrow    ~   \dot{\widehat{V}} (x,u) \leq - \alpha \left(\widehat{V}(x,u)\right) ,\label{condition-3}
\end{align}

\end{enumerate}
 then, $\widehat{V}(x,u)$ is  called a generalized ISS-Lyapunov functional (GISS-LF) for the control system $\Sigma$.
\end{definition}

\begin{remark} Unlike  CISS-LFs, the GISS-LFs  explicitly incorporate  input-dependent terms through $h_i(x,u)$, thereby providing greater flexibility in the selection of Lyapunov candidates for the ISS analysis. In the special case where $h_i(x,u)\equiv x$ for all $i\in\mathcal{I}$, the GISS-LF reduces to the classical one.  The relationship between  $x,u,h_i,V,\widehat{V}$, and $\mathbb{R}$ is illustrated in Fig. \ref{Fig1}.
\end{remark}


 \begin{figure}[htbp]
  \centering
		\includegraphics[scale=0.35]{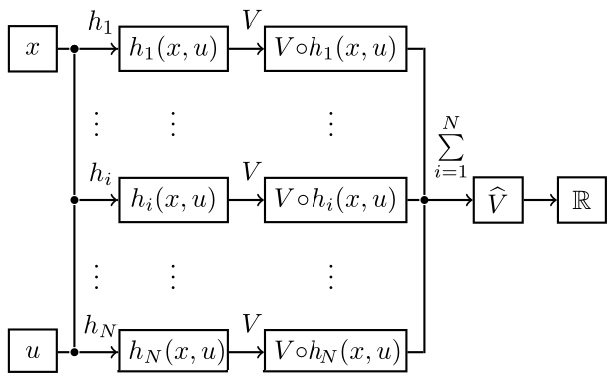}
		  \caption{Relationship between  $x,u,h_i,V,\widehat{V}$, and $\mathbb{R}$  for $\mathcal{I}=\{1,2,...,N\}$. }\label{Fig1}
\end{figure}

\begin{remark}\label{Remark2}  GISS-LFs have the following properties:
 \begin{enumerate}[(i)]

 \item Positive semidefiniteness. For a fixed $u$, the functional  $ \widehat{V}(x,u) $
  may be  degenerate at $x\neq 0$, namely,   $ \widehat{V} (x,u) =0$ at some $x\neq 0$. Thus, a GISS-LF is positive semidefinite.

  \item    Coercivity w.r.t. $x$.  By virtue of   condition~\eqref{condition-1},  the functional $\widehat{V}(x,u)$ is coercive  w.r.t. $x$  in the sense that for any fixed $u$,
      \begin{align*}
     \|x\|_{X}\to +\infty ~~ \Rightarrow~~    \widehat{V}(x,u)\to +\infty.
      \end{align*}
  \item    Non-coercivity w.r.t. $(x,u)$.
Note that $\widehat{V}(x,u)$ is non-coercive w.r.t. $(x,u)$ in the sense that
      \begin{align*}
     \|x\|_{X}+ \|u\|_{\mathcal{U}}\to +\infty  ~~\not\Rightarrow~~  \widehat{V}(x,u)\to +\infty.
      \end{align*}
  \end{enumerate}
\end{remark}

In the sequel, we always assume that conditions \textbf{(H1)} and \textbf{(H2)} are satisfied.

  A  generalized ISS Lyapunov theorem was proved in \cite{Zheng:2025}.

 \begin{theorem}[Generalized ISS Lyapunov theorem, \cite{Zheng:2025}]\label{main result}  If  the control system~$\Sigma$ admits a GISS-LF, then it is ISS.
\end{theorem}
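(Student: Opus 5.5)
The plan is to follow the standard proof of the classical ISS Lyapunov theorem (Theorem~\ref{Thm}), applied to the scalar function $t\mapsto \widehat{V}(\phi(t,x,u),u(t+\cdot))$. Coercivity is used only at the very end, through \eqref{condition-1}. Fix $(x,u)\in X\times\mathcal{U}$ and set $x(t):=\phi(t,x,u)$ and $u_t:=u(t+\cdot)$. By the cocycle property, the Lie derivative in \eqref{condition-3} evaluated at $(x(t),u_t)$ is exactly the upper right Dini derivative $D^+v(t)$ of
\[
v(t):=\widehat{V}(x(t),u_t).
\]
By the axiom of shift invariance, $\|u_t\|_{\mathcal{U}}\le\|u\|_{\mathcal{U}}$. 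Hence \eqref{condition-3} gives the following: whenever $v(t)\ge\rho_2(\|u\|_{\mathcal{U}})$, we have $D^+v(t)\le-\alpha(v(t))$. Continuity of $v$ should follow from continuity of $t\mapsto x(t)$, of $V$ and of the $h_i$. There is a subtlety here, since $t\mapsto u_t$ need not be continuous in $\mathcal{U}$ for piecewise-right-continuous inputs. I would handle it either by accepting right continuity, which suffices for Dini comparison arguments, or by working with the comparison on intervals where $u$ is continuous.

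The next step is a comparison argument for $v$. Let $r:=\rho_2(\|u\|_{\mathcal{U}})$. First I will show that the sublevel set $\{v\le r\}$ is forward invariant. If $v(t_0)\le r$ and $v$ later exceeded $r$, take the last time before the exceedance at which $v=r$. On the subsequent interval $v>r$ holds, so $D^+v<0$ there, and a standard Dini-derivative monotonicity lemma contradicts the increase. Second, on the maximal interval $[0,t^*)$ where $v>r$, the inequality $D^+v\le-\alpha(v)$ together with the comparison principle gives $v(t)\le\beta_0(v(0),t)$. Here $\beta_0\in\mathcal{KL}$ is the solution of $\dot y=-\alpha(y)$ (Lin--Sontag--Wang lemma), and I may assume $\alpha$ is locally Lipschitz after minorizing it. Combining the two steps yields
\[
v(t)\le\beta_0(v(0),t)+\rho_2(\|u\|_{\mathcal{U}})\quad\text{for all }t\ge0.
\]

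Finally I convert this into the ISS estimate. From \eqref{condition-1} we get $\psi_2(\|x(t)\|_X)\le v(t)+\rho_1(\|u_t\|_{\mathcal{U}})\le v(t)+\rho_1(\|u\|_{\mathcal{U}})$. For the initial value, \eqref{condition-2} and \eqref{k0} give
\[
v(0)\le\sum_{i\in\mathcal{I}}\psi_1\bigl(\psi_0(\|x\|_X)+\rho_0(\|u\|_{\mathcal{U}})\bigr)\le|\mathcal{I}|\bigl(\psi_1(2\psi_0(\|x\|_X))+\psi_1(2\rho_0(\|u\|_{\mathcal{U}}))\bigr).
\]
Then $\beta_0(a+b,t)\le\beta_0(2a,t)+\beta_0(2b,t)\le\beta_0(2a,t)+\beta_0(2b,0)$. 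Applying $\psi_2^{-1}$ and using $\psi_2^{-1}(a+b)\le\psi_2^{-1}(2a)+\psi_2^{-1}(2b)$ produces $\beta\in\mathcal{KL}$ and $\gamma\in\mathcal{K}$ with $\|x(t)\|_X\le\beta(\|x\|_X,t)+\gamma(\|u\|_{\mathcal{U}})$, which is \eqref{ISS-form}. If one wants $\sup_{s\in(0,t)}\|u(s)\|_U$ instead, causality lets us replace $u$ by its truncation at time $t$ under \textbf{(H1)}.

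The main obstacle is the first two steps. Specifically, I must justify that the Lie derivative in \eqref{condition-3}, which is defined with respect to varying $x$ but a fixed input, really controls $D^+v(t)$ when both arguments of $\widehat{V}$ move along the trajectory. I also need enough regularity of $v$ for the comparison lemma. I will handle this by interpreting $\dot{\widehat V}(x,u)$ as the derivative along $(\phi(t,x,u),u(t+\cdot))$, as in \cite{Zheng:2025}, and by using only right continuity of $v$ in the comparison argument.
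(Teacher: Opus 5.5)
\textbf{There is a genuine gap.} It sits exactly at the ``main obstacle'' you flag, and neither of your workarounds closes it.

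\emph{Continuity of $v$ fails.} Your function $v(t)=\widehat V(\phi(t,x,u),u(t+\cdot))$ is not known to be continuous. Under \textbf{(H1)}, the map $t\mapsto u(t+\cdot)$ into $\mathcal U$ (sup norm) is not even right-continuous at any $t<s_0$ if $u$ jumps at $s_0$. Evaluating at $\sigma=s_0-t-\tau$ gives $\|u(t+\tau+\cdot)-u(t+\cdot)\|_{\mathcal U}\ge\|u(s_0)-u(s_0-\tau)\|_U$, and this tends to the jump size as $\tau\to0^+$. The $h_i$ are only assumed continuous on $X\times\mathcal U$, so $v$ may jump.

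Your two workarounds do not help:
\begin{itemize}
\item Restricting to intervals where $u$ is continuous fails, because $u(t+\cdot)$ sees all later jumps of $u$.
\item Right-continuity would not suffice anyway: $\mathbf{1}_{[1,\infty)}$ has $D^+\equiv 0$ and still increases.
\end{itemize}
So both the invariance step (``last time at which $v=r$'') and the comparison with $\dot y=-\alpha(y)$ are unsupported.

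\emph{The derivative identification needs a redefinition.} The identity between the derivative in \eqref{condition-3} at $(x(t),u_t)$ and $D^+v(t)$ holds only if you redefine $\dot{\widehat V}$ to differentiate the shifted input as well. With the Lie derivative of Section~\ref{Sec. II}, the two differ by $\widehat V(x(t+\tau),u_{t+\tau})-\widehat V(x(t+\tau),u_t)$, and nothing in the hypotheses controls this term. That redefinition is also not what the paper verifies. In Proposition~\ref{Prop. 2.2} and Sections~\ref{GLF-parabolic}--\ref{GLF-wave}, the input-dependent shift ($z_{u,T}$ or $M$) is built from $\sup_{(0,T)}$ and held constant when differentiating. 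Under your reading, $M$ would become a nonincreasing $M(t)$ built from $u(t+\cdot)$. Since $G(\pm w-M)$ is decreasing in $M$, this adds a nonnegative contribution that none of those computations bound.

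\textbf{The repair is not to shift the input inside $\widehat V$.} Fix $u$, or, for a horizon $T$, any input agreeing with $u$ on $[0,T]$ (by causality it produces the same trajectory there). Set $W(t):=\widehat V(\phi(t,x,u),u)$.
\begin{itemize}
\item $W$ is continuous, as a composition of continuous maps.
\item Read \eqref{condition-3} as a bound on the Lie derivative of $\widehat V(\cdot,u)$ at $\phi(t,x,u)$, which is how the paper uses it. This gives $D^+W(t)\le-\alpha(W(t))$ whenever $W(t)\ge\rho_2(\|u\|_{\mathcal U})$.
\item Your invariance/comparison argument and your final conversion via \eqref{condition-2}, \eqref{k0}, \eqref{condition-1} then go through verbatim. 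Shift invariance is no longer needed.
\end{itemize}
The paper itself only cites the theorem from \cite{Zheng:2025}, but its proof of Proposition~\ref{Prop. 2.2} has exactly this structure. It freezes $z_{u,T}$ on $(0,T)$ and applies the comparison principle to the resulting function. It then uses the coercivity condition at $t=T$ and lets $T$ vary.
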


The following result, which is a direct consequence of Theorem \ref{main result}, is   suited for ISS analysis of specific PDEs involving only first order time derivatives, in particular,  parabolic equations and first order hyperbolic equations.

\begin{corollary}[\cite{Zheng:2025}]\label{Corollary2}   {Suppose that     there exist} a continuous functionalal $V:X\to\mathbb{R}_{\geq 0}$,
 functions $\psi_1,\psi_2\in\mathcal{K}_{\infty}, \mu,\rho\in \mathcal{K}$, and positive constant $\lambda_0>0$   
  such that
 \begin{enumerate}
\item[(i)]  for  any  $x\in X$, it holds that
  \begin{align*}
0\leq &V(x)\leq \psi_1(\| x\|_X);
\end{align*}
\item[(ii)]   for  any   $x\in X$ and {any $z\in X$, it holds that}      
 \begin{align*}
  \psi_2\left(\|   x \|_X\right) \leq  { V(x-z )+V(-x-z )}+ \mu\left( \|z \|_{X}  \right);
 \end{align*}

\item[(iii)]  for  any    $u\in \mathcal{U}$, there is  {$z_u\in X$} satisfying {$ \|z_u\|_{X}  \leq \rho(\|u\|_{\mathcal{U}})$}, such  that the  derivative of $  V(x- {z_u})+V(-x-{z_u}) $  along the   trajectory satisfies
\begin{align}
 \dot{V}(x- {z_u})+\dot{V}(-x-{z_u})
 \leq  &-\lambda_0( V(x- {z_u})+V(-x-{z_u})).\label{Equ.10}
\end{align}

 \end{enumerate}
  Then, $V(x- {z_u})+V(-x-{z_u})$ is a GISS-LF and the control system~$\Sigma$ is ISS.
\end{corollary}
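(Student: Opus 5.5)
The plan is to verify directly that the data of Corollary~\ref{Corollary2} fit Definition~\ref{Def:GISS-LF} and then invoke Theorem~\ref{main result}. Take the index set $\mathcal{I}:=\{1,2\}$ and define
\begin{align*}
h_1(x,u):=x-z_u,\qquad h_2(x,u):=-x-z_u,
\end{align*}
so that $\widehat{V}(x,u)=V(x-z_u)+V(-x-z_u)$. One technical point is the continuity of $h_i$ required in Definition~\ref{Def:GISS-LF}. It holds provided $u\mapsto z_u$ is chosen continuously, which is the case for the concrete constructions (where $z_u$ is built linearly from the boundary data). I would either note this or assume it as implicit in (iii).

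Condition \eqref{condition-2} is exactly hypothesis (i). For \eqref{k0}, the triangle inequality and (iii) give, for $i=1,2$,
\begin{align*}
\|h_i(x,u)\|_X\leq \|x\|_X+\|z_u\|_X\leq \|x\|_X+\rho(\|u\|_{\mathcal{U}}),
\end{align*}
so one may take $\psi_0(s):=s\in\mathcal{K}_\infty$ and $\rho_0:=\rho$. For \eqref{condition-1}, apply hypothesis (ii) with $z:=z_u$. This yields $\psi_2(\|x\|_X)\leq \widehat{V}(x,u)+\mu(\|z_u\|_X)$. Since $\mu$ is increasing and $\|z_u\|_X\leq\rho(\|u\|_{\mathcal{U}})$, we get $\mu(\|z_u\|_X)\leq \mu\circ\rho(\|u\|_{\mathcal{U}})$. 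Hence $\rho_1:=\mu\circ\rho\in\mathcal{K}$ works.

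For \eqref{condition-3}, the dissipative inequality \eqref{Equ.10} is a stronger statement than the required implication. Since the Dini derivative is subadditive, $\dot{\widehat V}\leq \dot V(x-z_u)+\dot V(-x-z_u)$. Therefore $\dot{\widehat{V}}(x,u)\leq-\lambda_0\widehat{V}(x,u)$ holds for all $(x,u)$, in particular whenever $\widehat V(x,u)\geq\rho_2(\|u\|_{\mathcal U})$. One can take any $\rho_2\in\mathcal{K}$, say $\rho_2(s):=s$, and $\alpha(s):=\lambda_0 s\in\mathcal{K}_\infty$. Thus $\widehat V$ is a GISS-LF, and Theorem~\ref{main result} gives ISS of $\Sigma$.

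The main obstacle is interpretive rather than computational. The derivative in (iii) must be understood along trajectories with $u$ (hence $z_u$) following the input, consistent with how $\dot{\widehat V}$ is defined in Definition~\ref{Def:GISS-LF}. The continuity of $u\mapsto z_u$ also has to be settled as described above. Once these two points are fixed, every remaining step is a one-line inequality.
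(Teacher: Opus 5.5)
Your proposal is correct and matches the paper, which records the corollary as a direct consequence of Theorem~\ref{main result}. Your choices $h_1(x,u)=x-z_u$, $h_2(x,u)=-x-z_u$, $\psi_0(s)=s$, $\rho_0=\rho$, $\rho_1=\mu\circ\rho$, $\alpha(s)=\lambda_0 s$ are exactly the verification of Definition~\ref{Def:GISS-LF} that this entails. Your caveat about continuity of $u\mapsto z_u$ is a fair point the paper leaves implicit; note that in its applications $z_{u,T}=M$ is built continuously, but not linearly, from sup-norms of the inputs.
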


Furthermore, noting that $\mathcal{U}:=PC(\mathbb{R}_{\geq 0};U)$ (see condition \textbf{(H1)}),  we can prove the following result, which is more convenient for conducting  ISS analysis  of  PDEs.
\begin{proposition}\label{Prop. 2.2}
Under the conditions    of Corollary \ref{Corollary2} with (iii) replaced by
\begin{enumerate}
 \item[(iii')]  for  any $T\in\mathbb{R}_{>0}$ and   $u\in \mathcal{U}$, there is  {$z_{u,T}\in X$} satisfying  $$ \|z_{u,T}\|_{X}  \leq \rho\left(\sup_{t\in(0,T)}\|u(t)\|_{ {U}}\right),$$   such  that the  derivative of $  V(x- {z_{u,T}})+V(-x-{z_{u,T}}) $  along the   trajectory satisfies for all $t\in(0,T)$:
\begin{align}
  \dot{V}(x- {z_{u,T}})+\dot{V}(-x-{z_{u,T}})
 \leq  &-\lambda_0( V(x- {z_{u,T}})+V(-x-{z_{u,T}})),\label{Equ.10'}
\end{align}
\end{enumerate}
 the control system~$\Sigma$ is ISS,
   having the estimate for all $t\in \mathbb{R}_{>0}$:
   \begin{align}
  \psi_2\left(\|   \phi(t,x,u) \|_X\right)\leq &2 e^{-\lambda_0t}\psi_1(2\| x\|_X) +  \gamma \left( \sup_{s\in(0,t)}\|u(s)\|_{{U} }\right)  \label{Equ.11}
\end{align}
with    $   \gamma(\cdot):=  2\psi_1(2\rho(\cdot))+\mu(\rho(\cdot))\in \mathcal{K}_{\infty}$.
\end{proposition}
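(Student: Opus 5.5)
The plan is a direct Gronwall-type argument on a fixed horizon. I will not pass through Theorem~\ref{main result}.

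Fix $x\in X$, $u\in\mathcal{U}$ and $t\in\mathbb{R}_{>0}$, and choose any $T>t$. To shorten notation write $z:=z_{u,T}$ and
\begin{align*}
W(s):=V(\phi(s,x,u)-z)+V(-\phi(s,x,u)-z),\quad s\in[0,T).
\end{align*}

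\textbf{Step 1 (decay on the horizon).} Condition (iii') is assumed to hold along the trajectory for all $s\in(0,T)$. So $D^+W(s)\le -\lambda_0 W(s)$ on $(0,T)$.

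By continuity of $V$ and of $s\mapsto\phi(s,x,u)$, the function $W$ is continuous on $[0,T)$. A comparison lemma for upper Dini derivatives (the Dini-derivative version of Gronwall, Lemma~\ref{Gronwall}) then gives
\begin{align*}
W(t)\le e^{-\lambda_0 t}W(0).
\end{align*}
I apply it to $e^{\lambda_0 s}W(s)$: its upper Dini derivative is $\le 0$, so this function is nonincreasing.

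\textbf{Step 2 (upper bound on $W(0)$).} By (i) and the triangle inequality,
\begin{align*}
W(0)\le 2\psi_1(\|x\|_X+\|z\|_X)\le 2\psi_1(2\|x\|_X)+2\psi_1(2\|z\|_X),
\end{align*}
where the last step uses $\psi_1(a+b)\le\psi_1(2a)+\psi_1(2b)$, valid since $\psi_1$ is increasing.

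\textbf{Step 3 (lower bound via (ii)).} Apply (ii) with $x$ replaced by $\phi(t,x,u)$ and with $z=z_{u,T}$:
\begin{align*}
\psi_2(\|\phi(t,x,u)\|_X)\le W(t)+\mu(\|z\|_X).
\end{align*}

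\textbf{Combining.} Use $e^{-\lambda_0t}\le1$ on the $z$-part, together with $\|z\|_X\le\rho\bigl(\sup_{s\in(0,T)}\|u(s)\|_U\bigr)$ and the monotonicity of $\psi_1,\mu,\rho$. This yields \eqref{Equ.11}, except that the sup is over $(0,T)$ instead of $(0,t)$.

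\textbf{Main obstacle: shrinking the window from $(0,T)$ to $(0,t)$.} The estimate involves the input on $(0,T)$ with $T>t$, and I will remove this by causality together with (H1).

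Define $\tilde u:=u\underset{t}{\diamond}0$, i.e. $\tilde u=u$ on $[0,t]$ and $\tilde u=0$ afterwards. This is in $\mathcal{U}$ by the concatenation axiom, and causality gives $\phi(t,x,u)=\phi(t,x,\tilde u)$.

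Applying the bound above to $\tilde u$ gives $\sup_{s\in(0,T)}\|\tilde u(s)\|_U=\sup_{s\in(0,t]}\|u(s)\|_U$. The issue is the endpoint value $u(t)$. I handle it by taking instead $\tilde u=u\underset{t'}{\diamond}0$ with $t'<t$, applying the estimate at a time $t'<t$, and letting $t'\uparrow t$. The right side then involves only $\sup_{(0,t')}\le\sup_{(0,t)}$, and the left side converges by continuity of $s\mapsto\phi(s,x,u)$ and of $\psi_2$.

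Finally, since $\gamma(\cdot):=2\psi_1(2\rho(\cdot))+\mu(\rho(\cdot))\in\mathcal{K}_\infty$, applying $\psi_2^{-1}$ and the elementary inequality $\psi_2^{-1}(a+b)\le\psi_2^{-1}(2a)+\psi_2^{-1}(2b)$ produces the $\mathcal{KL}$ plus $\mathcal{K}$ form required by Definition~\ref{Def. ISS}. This proves ISS.
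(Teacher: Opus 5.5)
Your proof is correct, and its core is the same as the paper's: the comparison principle for $s\mapsto W(s)$ (via monotonicity of $e^{\lambda_0 s}W(s)$), the bound $W(0)\le 2\psi_1(2\|x\|_X)+2\psi_1(2\|z_{u,T}\|_X)$ from (i), and the lower bound from (ii). The only difference is the horizon: the paper simply takes $T=t$, because with $z_{u,T}$ fixed the map $s\mapsto W(s)$ is continuous on $[0,T]$, so the decay bound extends to $s=T$ and the input enters only through $\sup_{s\in(0,T)}\|u(s)\|_U$; your choice $T>t$ is what creates the endpoint issue, and the causality/concatenation argument with $t'\uparrow t$ that removes it is valid but unnecessary.
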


\begin{proof}
Let $\widehat{V}(x):= V(x- {z_{u,T}})+V(-x-{z_{u,T}})$.
%
  Applying the comparison principle (\cite[Proposition A.35, p.~328]{Mirochenko:2023book}) to \eqref{Equ.10'} and using condition (i),  we deduce that
\begin{align*}
  \widehat{V}(\phi(T,x,u))
 \leq &  e^{-\lambda_0 T} \widehat{V}(x)\notag\\
 \leq & e^{-\lambda_0 T}\left(\psi_1(\|x-z_{u,T}\|_X)+\psi_1(\|-x-z_{u,T}\|_X)\right) \\
  \leq & 2 e^{-\lambda_0 T}\left(\psi_1(\|x \|_X+ \| z_{u,T}\|_X)\right)\\
  \leq &2 e^{-\lambda_0 T}\left(\psi_1(2\|x \|_X) + \psi_1(2\| z_{u,T}\|_X)\right)\notag\\
  \leq & 2 e^{-\lambda_0 T }\psi_1(2\|x \|_X) +2 \psi_1(2\| z_{u,T}\|_X).
\end{align*}
Using   condition  (ii) and the assumption that $ \|z_{u,T}\|_{X}  \leq \rho\left(\sup_{t\in(0,T)}\|u(t)\|_{ {U}}\right)$, we obtain
\begin{align*}
 \psi_2\left(\|  \phi(T,x,u) \|_X\right)
 \leq&  \widehat{V}(\phi(T,x,u))  + \varphi\left( \|z_{u,T} \|_{X}  \right)   \\
   \leq &2 e^{-\lambda_0 T} \psi_1(2\|x \|_X) + 2\psi_1(2\| z_{u,T}\|_X)  + \mu\left( \|z_{u,T} \|_{X}  \right)\notag\\
   \leq & 2 e^{-\lambda_0 T} \psi_1(2\|x \|_X) + 2 \psi_1\left(2\rho\left(\sup_{t\in(0,T)}\|u(t)\|_{ {U}}\right)\right)   + \mu\left(\rho\left(\sup_{t\in(0,T)}\|u(t)\|_{ {U}}\right)\right),
\end{align*}
which, along with the arbitrariness of $T$, gives the desired result.
\end{proof}

\begin{remark}\label{Remark5}
By virtue   Proposition~\ref{Prop. 2.2}, we   call $  \widehat{V}(x):=V(x- {z_{u,T}})+V(-x-{z_{u,T}}) $ a GISS-LF for the control system~$\Sigma$ over the arbitrary time interval $(0,T)$. Moreover, when $T$ is not emphasized, we   simply call   $  \widehat{V}(x)$   a GISS-LF for the control system~$\Sigma$.
\end{remark}

It is worth noting that, due to the diversity and complexity of PDEs, constructing an explicit GISS-LF within a unified framework is extremely challenging,  and such constructions often require a case-by-case treatment.
Nevertheless, for the most representative classes of PDEs, in particular,  parabolic equations, first order hyperbolic equations, and wave equations, GISS-LFs can be constructed in a relatively unified manner. For these PDEs, a key technique lies in the use of Stampacchia's truncation to construct truncation functions and disturbance-dependent energy functionals $V(h_i(x,u))$. The number of such energy functionals is determined by the number of state variables  when the PDE is reformulated as an abstract system. Details of constructing GISS-LFs for these three classes of PDEs are provided in Sections \ref{GLF-parabolic},  \ref{GLF-hyperbolic}, and \ref{GLF-wave},  respectively.

%


\section{ISS analysis for $N$-D    parabolic equations via GLM}\label{GLF-parabolic}

In this section, we construct a GISS-LF for higher dimensional parabolic PDEs with different disturbances under nonlinear boundary conditions.

 In the sequel, let $N\geq 1$, and let $\Omega$ be an open bounded domain in $\mathbb{R}^N$ with smooth (when $N>1$) boundary   denoted by $\partial \Omega$. At each point $y\in\partial \Omega$, the unit outward normal vector is denoted by $\bm{\nu}(y)$.
 For a  differentiable {function  $h$}  defined over $\Omega$, $\nabla h$ denotes the gradient of $h$. 
 The notation $\diff S$ denotes the area element of $\partial \Omega$. The notation $ \mathscr{L}^N(\Omega) $ (or $ \mathscr{L}^{N-1}(\Gamma) $) denotes the $N$- (or  $(N-1)$-) dimensional Lebesgue  measure of $\Omega$ (or a set of $\Gamma\in \partial \Omega$).
 The notation $\Delta$ represents the standard Laplace operator.  For $T>0$, let $Q_T:=\Omega\times(0,T)$ and
 $Q_{\infty}:=\Omega\times\mathbb{R}_{>0}$.

  We consider the following higher dimensional nonlinear parabolic equation with both distributed in-domain and mixed  boundary disturbances:
  \begin{subequations}\label{parabolicPDE-d}
\begin{align}
w_t(y,t) =&\divi\   (a (y,t)\nabla w(y,t) )-c(y,t) \phi(w (y,t)) -{h(y,t,w(y,t) )} \notag\\
& +f(y,t),(y,t)\in Q_{\infty},\\
w(y,t) =&d_1(y,t),(y,t)\in \Gamma_1\times \mathbb{R}_{>0},\\
 a(y,t) \frac{\partial w(y,t)}{\partial \bm{\nu}} =&-\varphi(w(y,t))  +d_2(y,t),(y,t)\in \Gamma_2\times  \mathbb{R}_{>0},\\
w(y,0)=&w_0(y),y\in\Omega,
\end{align}
\end{subequations}
where
\begin{enumerate}
\item[$\bullet$]   $\Gamma_1$ and $\Gamma_2$ are  smooth  and disjoint  boundary sets ($(N-1)$-dimensional hypersurfaces) satisfying (see, e.g., Fig.~\ref{domain})
\begin{align*}
\Gamma_1\cup\Gamma_2=& \partial \Omega,\\
0\leq \mathscr{L}^{N-1}(\Gamma_1)\leq &\mathscr{L}^{N-1}(\partial \Omega);
 \end{align*}
 \item[$\bullet$] $a,c :\Omega\times\mathbb{R}_{> 0}\to {\mathbb{R}_{>  0}} $ are smooth functions satisfying
\begin{align*}
a(y,t ) \geq a_0 ,\forall (y,t )\in\Omega\times\mathbb{R}_{> 0},\\
c(y,t ) \geq c_0 ,\forall (y,t )\in\Omega\times\mathbb{R}_{> 0},
 \end{align*}
 with   positive constants $a_0 $ and $c_0$;
 \item[$\bullet$]   $\phi:  \mathbb{R}  \to\mathbb{R}$ is a  smooth  and strictly increasing function    satisfying
    \begin{subequations}\label{condition-phi}
    \begin{align}
   \phi(v)v\geq & 0, \forall v\in \mathbb{R} , \\
\phi(-v) \leq &-\phi(v),\forall v\in \mathbb{R}_{\geq 0} ,\label{condition-phi-2}\\
   \phi'(v)\geq& 1,\forall v\in \mathbb{R}  ;\label{condition-phi}
\end{align}
 \end{subequations}
  \item[$\bullet$] $h:\Omega\times\mathbb{R}_{> 0}\times\mathbb{R}\to\mathbb{R}$ is a  smooth  function satisfying
\begin{align}
h(y,t,v)v\geq 0,\forall (y,t,v)\in\Omega\times\mathbb{R}_{> 0}\times\mathbb{R} ;\label{condition-h}
\end{align}
\item[$\bullet$]   $ \varphi: \mathbb{R}  \to\mathbb{R}$ is a  smooth and strictly increasing function  satisfying
   \begin{subequations}\label{condition-varphi-total}
    \begin{align}
\varphi(v)v\geq &0,\forall v\in \mathbb{R}  ,\label{condition-varphi}\\
\varphi(-v) \leq &-\varphi(v),\forall v\in \mathbb{R}_{\geq 0}  ;\label{condition-varphi-2}
\end{align}
 \end{subequations}
 \item[$\bullet$]   $f,d_1,d_2:\Omega\times\mathbb{R}_{\geq  0}\to  \mathbb{R}$ are smooth functions, representing distributed in-domain and boundary disturbances, respectively;
       \item[$\bullet$] $w_0:\Omega \to  \mathbb{R}$ is a smooth function, representing the initial data.
\end{enumerate}

\begin{figure}
\centering

		\includegraphics[scale=0.4]{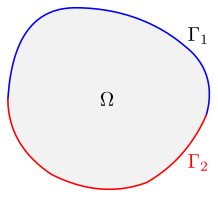} 	\ \ \ \ \ \ \ \ \ \ \ \ \ \ \ \ \ \ \ \ \ \ \ \ \ \ \ \ \ \ \includegraphics[scale=0.4]{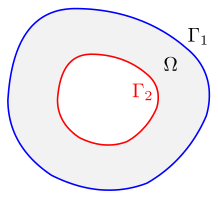}\\

\small{(a) Simply connected domain	 \ \ \ \ \ \ \ \ \ \ \ \ \ \ \ \ \ \ \ \ (b) Multiply connected domain}

\caption{Illustration of a domain     in $   \mathbb{R}^2$.}
\label{domain}
\end{figure}

We apply the technique of Stampacchia's truncation to construct a GISS-LF.
 For any $p\in(1,\infty)$,  let
\begin{subequations}\label{Def:G}
\begin{align}
g(s):=&  \left\{\begin{aligned}
& s^p,\ s\in \mathbb{R}_{\geq 0},\\
& 0,\ s\in \mathbb{R}_{< 0},
\end{aligned}\right.\label{18a}\\
G(s):=&\int_0^sg(\tau)\diff \tau
= \left\{\begin{aligned}
& \frac{1}{p+1}s^{p+1},\ s\in \mathbb{R}_{\geq 0},\\
& 0,\ s\in \mathbb{R}_{< 0}.\end{aligned}\right.
\end{align}
\end{subequations}

Note that  $g$ and $G$ have the following properties, which will be extensively used in this paper:
\begin{enumerate}
\item[\textbf{(G1)}]   $G(s)=g(s)= 0  $  for all $s\in \mathbb{R}_{\leq 0}$;
\item[\textbf{(G2)}]  $G(s)=\frac{1}{p+1}g(s)s$ for all $s\in \mathbb{R} $;
\item[\textbf{(G3)}]    $g'(s)\geq 0$  for all $s\in \mathbb{R} $;
\item[\textbf{(G4)}]$G(s )\leq    G(s+\tau)+G(s-\tau) $ for all $s,\tau\in \mathbb{R} $;
\item[\textbf{(G5)}]$G(|s|+\tau )\leq    G(s+\tau)+G(-s+\tau) $ for all $s,\tau\in \mathbb{R} $;
\item[\textbf{(G6)}]$G(s+\tau)\leq  2^p(G(s)+G(\tau))$ for all $s,\tau\in \mathbb{R} $;
\item[\textbf{(G7)}]  $G(|s|)\leq   2^p (G(s+\tau-M)+G(s-\tau-M) +G(-s+\tau-M)+G(-s-\tau-M)) 
   +2^pG(M) $ for all $ s,\tau,M\in\mathbb{R}$;
 
\item[\textbf{(G8)}] Young's inequality:
\begin{align*}
 g(s)\tau\leq \varepsilon  G(s)+\left(\frac{p}{\varepsilon}\right) ^{ p}G(\tau),\forall s,\tau\in \mathbb{R} , \varepsilon\in \mathbb{R}_{>0}.
 \end{align*}
\end{enumerate}
The proofs of properties \textbf{(G1)}-\textbf{(G8)} are provided in Appendix \ref{Appendix: G}.

 Since $\phi$ and $\varphi$ are strictly increasing,   their inverses, denoted by $\phi^{-1}$ and $\varphi^{-1}$, respectively, exist.   Moreover,  $\phi^{-1},\varphi^{-1}\in\mathcal{K}$. For any $T>0$, let
\begin{align*}
 M:=&\phi^{-1}\left(\frac{1}{c_0} \|f\|_{\infty,T}\right)
  + \|d_1\|_{\infty,T}
 +\varphi^{-1}\left(\|d_2\|_{\infty,T}\right),
\end{align*}
where, for notational simplicity, we denote
\begin{align*}
 \|f\|_{\infty,T}:=&\sup_{(y,t)\in Q_T}|f(y,t)|  ,\\
\|d_1\|_{\infty,T}:=& \sup_{(y,t)\in \Gamma_1\times(0,T)}|d_1(y,t)| ,\\
 \|d_2\|_{\infty,T}:=& \sup_{(y,t)\in \Gamma_2\times(0,T)}|d_2(y,t)|.
\end{align*}
It is clear that
\begin{align*}
 \|f\|_{\infty,T} \leq& c_0\phi\left(M\right), \\
 \|d_1\|_{\infty,T}\leq& M,\\
\|d_2\|_{\infty,T}\leq& \varphi(M).
\end{align*}

Let
\begin{align*}
V(w):=& \int_{\Omega}G(w(y,t))\diff y,
\end{align*}
  and
  \begin{align}
\widehat{V}(w):= V (w -M) +V (-w -M).\label{GISS-LF-V}
\end{align}
Note that $\widehat{V}(w)= 0$ for  $|w|\leq M$. Thus, $\widehat{V}(w)$ is     positive semidefinite.

 For $\widehat{V}(w)$ and system~\eqref{parabolicPDE-d}, we have the following result.
 \begin{proposition}\label{Prop.PDE1}
 The following statements hold true:

 \begin{itemize}
\item[(i)]
 For any $p\in (1,\infty)$, the functional $ \widehat{V} (w)$ defined by \eqref{GISS-LF-V} is a GISS-LF for system~\eqref{parabolicPDE-d} over the arbitrary time interval $(0,T)$ and hence,   system~\eqref{parabolicPDE-d} is ISS in the   $L^{p+1}$-norm.
 \item[(ii)]
Furthermore,    system~\eqref{parabolicPDE-d} is ISS in the   $L^{q}$-norm for any $q\in [2,\infty]$,   having the estimate
   \begin{align}
 \|  w(\cdot,T) \|_{L^{q}(\Omega)}
     \leq& 4   \|  w_0 \|_{L^{q}(\Omega)}e^{-c_0 T}+8M,\forall T\in \mathbb{R}_{>0}.\label{q-estimate}
\end{align}
 \end{itemize}
\end{proposition}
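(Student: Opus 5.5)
The plan is to verify conditions (i), (ii) and (iii') of Proposition~\ref{Prop. 2.2} with $X=L^{p+1}(\Omega)$, $V(w)=\int_\Omega G(w)\diff y$ and $z_{u,T}\equiv M$. The constant function $M$ satisfies $\|M\|_{L^{p+1}}=\mathscr{L}^N(\Omega)^{1/(p+1)}M$, and $M$ is a $\mathcal{K}$-function of the sup-norms of $(f,d_1,d_2)$ on $(0,T)$.

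\textbf{Conditions (i) and (ii).} Condition (i) holds because $G(s)\le \frac{1}{p+1}|s|^{p+1}$, so $V(w)\le \frac{1}{p+1}\|w\|_{L^{p+1}}^{p+1}$. For (ii), I use \textbf{(G5)} with $\tau=-M$ and then \textbf{(G6)}. This gives
\[
\tfrac{1}{p+1}|w|^{p+1}=G(|w|)\le 2^p\big(G(|w|-M)+G(M)\big)\le 2^p\big(G(w-M)+G(-w-M)\big)+2^pG(M),
\]
where the last step uses $G(|w|-M)\le G(w-M)+G(-w-M)$, which follows from \textbf{(G5)}. Integrating over $\Omega$ yields (ii) with $\mu(s)$ a multiple of $s^{p+1}$.

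\textbf{Condition (iii').} This is the core step. Set $v=w-M$ and differentiate:
\[
\tfrac{\diff}{\diff t}V(w-M)=\int_\Omega g(w-M)\big[\divi(a\nabla w)-c\phi(w)-h+f\big]\diff y .
\]
Green's formula produces an interior term $-\int_\Omega a\,g'(w-M)|\nabla w|^2\diff y\le 0$ by \textbf{(G3)}, together with boundary terms.
\begin{itemize}
\item On $\Gamma_1$ we have $w=d_1\le M$, so $g(w-M)=0$ by \textbf{(G1)}. This is exactly how the Dirichlet disturbance is absorbed.
\item On $\Gamma_2$ the boundary term is $g(w-M)\big(-\varphi(w)+d_2\big)$. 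Where $g(w-M)\neq0$ we have $w>M$, hence $\varphi(w)\ge\varphi(M)\ge |d_2|$, and the term is $\le0$.
\item In the interior, on the set $w>M\ge0$ we have $h(w)\ge0$. Also $\phi(w)\ge\phi(M)+(w-M)$ by $\phi'\ge1$, so
\[
-c\phi(w)+f\le -c_0(w-M)-c_0\phi(M)+|f|\le -c_0(w-M).
\]
\end{itemize}
Using \textbf{(G2)}, these give $\frac{\diff}{\diff t}V(w-M)\le -c_0(p+1)V(w-M)$. The same argument applies to $V(-w-M)$ with $\tilde w=-w$, which satisfies an equation of the same type. Here \eqref{condition-phi-2} and \eqref{condition-varphi-2} are used to get $-\phi(w)=-\phi(-\tilde w)\ge\phi(\tilde w)$, and similarly for $\varphi$ and $h$. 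Summing the two gives \eqref{Equ.10'} with $\lambda_0=c_0(p+1)$, so Proposition~\ref{Prop. 2.2} gives (i).

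\textbf{Part (ii).} I would not invoke \eqref{Equ.11} directly, since it loses constants. Instead I integrate the differential inequality to get
\[
\widehat V(w(\cdot,T))\le e^{-c_0(p+1)T}\widehat V(w_0),
\]
and bound $\widehat V(w_0)\le 2V(|w_0|)$, since $G$ is increasing. Combining this with the pointwise lower bound from \textbf{(G7)}/\textbf{(G5)}\&\textbf{(G6)} and taking $(p+1)$-th roots gives
\[
\|w(T)\|_{L^{p+1}}\le 2\cdot 2^{1/(p+1)}\,\|w_0\|_{L^{p+1}}e^{-c_0T}+2\,\mathscr{L}^N(\Omega)^{1/(p+1)}M .
\]
A careful version would use $(|w|-M)_+$ directly to get sharper constants. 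Letting $q=p+1\in(2,\infty)$ covers $q\in(2,\infty)$. The case $q=2$ comes from $p\to1$, and $q=\infty$ from letting $p\to\infty$ via $\|\cdot\|_{L^\infty}=\lim\|\cdot\|_{L^q}$.

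The main obstacle is the \textbf{measure factor} $\mathscr{L}^N(\Omega)^{1/q}$ in front of $M$, which does not appear in \eqref{q-estimate}. To remove it, I would work with $\|(|w|-M)_+\|_{L^q}$. The inequality $(|w|-M)_+\le\ldots$ decays without a factor, and then $|w|\le(|w|-M)_++M$ pointwise. I expect the remaining $M$ term to yield $8M$ via either $\mathscr{L}^N(\Omega)\le1$ normalization or a per-$L^\infty$ bound; in the $L^\infty$ limit the factor disappears anyway. Bookkeeping these constants to match $4$ and $8$ is where I expect the difficulty.
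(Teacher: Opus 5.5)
Your treatment of part (i) follows the paper's proof essentially step for step: Green's formula, $g(w-M)=0$ on $\Gamma_1$, the sign of the $\Gamma_2$ term on $\{w>M\}$, and the bound $-c\phi(w)+f\le -c_0(w-M)$ there via $\phi'\ge 1$ and \textbf{(G2)}. The reflection $\widetilde w=-w$ using \eqref{condition-phi-2}, \eqref{condition-varphi-2} and the sign of $h$ then gives \eqref{Equ.10'} with $\lambda_0=c_0(p+1)$. For part (ii) you integrate $\dot{\widehat V}\le -c_0(p+1)\widehat V$ directly and combine it with the pointwise lower bound from \textbf{(G5)}--\textbf{(G6)}, instead of going through \eqref{Equ.11}. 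That is legitimate and gives smaller constants than the paper's.

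The measure factor you flag at the end cannot be removed, because for finite $q$ the inequality \eqref{q-estimate} is false on domains of large measure. Take $\Omega=(0,L)$, $\Gamma_1=\{0\}$, $\Gamma_2=\{L\}$, $a\equiv c\equiv 1$ (so $c_0=1$), $\phi(v)=\varphi(v)=v$, $h\equiv 0$, and $f\equiv d_1\equiv d_2\equiv w_0\equiv 1$. Then $w\equiv 1$ is a classical solution, compatible at $t=0$, and $M=3$. Inequality \eqref{q-estimate} would read $L^{1/q}\le 4L^{1/q}e^{-T}+24$ for every $T>0$, which fails for large $T$ once $L>24^{q}$.

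The paper's proof reaches $8M$ only because, in applying Proposition~\ref{Prop. 2.2}, it sets $z_{u,T}:=M$ and takes $\rho:=M$ as the bound on $\|z_{u,T}\|_{L^{p+1}(\Omega)}$. That norm actually equals $\mathscr{L}^N(\Omega)^{1/(p+1)}M$, so the paper tacitly assumes $\mathscr{L}^N(\Omega)\le 1$.

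Your alternative via $(|w|-M)_+$ runs into the same thing: $|w|\le (|w|-M)_++M$ brings back $\|M\|_{L^q(\Omega)}=\mathscr{L}^N(\Omega)^{1/q}M$. An $L^\infty$ bound does not help either, since it controls $L^q$ norms only with the same factor. So stop at the bound you already have:
\[
\|w(\cdot,T)\|_{L^{q}(\Omega)}\le 2\cdot 2^{1/q}\|w_0\|_{L^{q}(\Omega)}e^{-c_0T}+2\,\mathscr{L}^N(\Omega)^{1/q}M .
\]
This bound has three consequences.
\begin{enumerate}
\item[(a)] It implies \eqref{q-estimate} whenever $\mathscr{L}^N(\Omega)\le 4^{q}$, in particular under the normalization $\mathscr{L}^N(\Omega)\le 1$ that the paper implicitly uses.
\item[(b)] It gives the case $q=\infty$ exactly as stated, because $\mathscr{L}^N(\Omega)^{1/q}\to 1$.
\item[(c)] It leaves the ISS claim in (i) intact, since the factor only enters the gain.
\end{enumerate}
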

\begin{proof} We employ Proposition~\ref{Prop. 2.2} to prove this proposition.

First, it is clear that the conditions (i) and (ii) of Corollary~\ref{Corollary2} are fulfilled with (see \cite[Proposition 2]{Zheng:2025})
   \begin{align*}
   \psi_1(s):=&\frac{1}{p+1}  s^{p+1} ,\forall s\in \mathbb{R}_{\geq 0},\\
     \psi_2(s):=&\frac{1}{ 2^{p}(p+1)}s^{p+1},\forall s\in \mathbb{R}_{\geq 0},\\
     \mu(s):=&2s^{p+1},\forall s\in \mathbb{R}_{\geq 0}.
  \end{align*}
We shall verify the condition (iii') of Proposition~\ref{Prop. 2.2}.
Indeed,
for any $t\in(0,T)$, using the Green's formula (see \cite[Theorem~3~(ii), p.712]{Evans:2010}), we deduce that
\begin{align}
 \dot{V}(w-M)
 =& \frac{\diff~}{\diff t} \int_{\Omega} G(w-M) \diff y \notag\\
= &  \int_{\Omega} g(w-M)w_t\diff y \notag\\
= &  \int_{\Omega} g(w-M)(\divi\   (a w )-c \phi(w)
 -{h(y,t,w )}+f  )\diff y\notag\\
= &   \int_{\Gamma_1} ag(w-M)\frac{\partial w}{\partial \bm{\nu}}\diff S+ \int_{\Gamma_2} ag(w-M)\frac{\partial w}{\partial \bm{\nu}}\diff S - \int_{\Omega}a g'(w-M)|\nabla  w|^2  \diff y  \notag\\
&- \int_{\Omega} g(w-M) (c  \phi(w)-f)  \diff y  - \int_{\Omega} g(w-M) h(y,t,w)  \diff y.\label{parabolic-14}
 \end{align}

 Using     the boundary conditions, the definition   of $M$, \eqref{18a}, and the monotonicity of $\varphi$,  we have
 \begin{align*}
 g(w-M)=&0 \  \text{on}\  \Gamma_1\times (0,T)
 \end{align*}
 and
 \begin{align*}
   g(w-M)\frac{\partial w}{\partial \bm{\nu}}
  =&-g(w-M)(\varphi (w) -d_2)\notag\\
    \leq& -g(w-M)\left( \varphi (w) -\|d_2\|_{\infty,T}\right)\notag\\
     \leq& -g(w-M)\left( \varphi (w) -\varphi (M)\right)\notag\\
    \leq&     0 \  \text{on}\   \Gamma_2\times (0,T).
 \end{align*}
The condition \eqref{condition-h} ensures that
\begin{align*}
 h(y,t,v)  \geq &0,\forall (y,t,v)\in\Omega\times(0,T)\times\mathbb{R}_{\geq 0} ,
 \end{align*}
which, along with the  definition  $g$, ensures that
\begin{align*}
-  g(w-M) h(y,t,w)  \leq &0,\forall (y,t,w)\in\Omega\times(0,T)\times\mathbb{R} .
 \end{align*}
In addition, we deduce that
\begin{align*}
 -\int_{\Omega} g(w-M)(c\phi(w)- f)\diff y
 \leq&  -\int_{\Omega} g(w-M)\left(c\phi(w)-  \|f\|_{\infty,T}\right)\diff y\notag\\
  \leq&  -\int_{\Omega} g(w-M)\left(c\phi(w)-  c_0\phi\left(M\right)\right)\diff y\notag\\
   \leq&  -c_0\int_{\Omega} g(w-M)\left( \phi(w)-  \phi\left(M\right)\right)\diff y\notag\\
    =&  -c_0\int_{\Omega}  g(w-M)\phi'(v)\left( w-  M\right)\diff y\notag\\
 \leq & -c_0\int_{\Omega} g(w-M)(w-  M)\diff y\quad (\text{by}\ \eqref{condition-phi})\notag\\
 =&-c_0(p+1)\int_{\Omega} G(w-M) \diff y, \quad (\text{by}\ \textbf{(G2)})
\end{align*}
where $v$ is a function between $w$ and $M$.

Then, in view of \textbf{(G3)}, for any $t\in(0,T)$, we get
\begin{align}
 \dot{V}(w-M)
\leq &  -c_0(p+1)  \int_{\Omega} G(w-M)     \diff y\notag \\
=& -c_0(p+1) V(w-M).\label{parabolic-16}
\end{align}

Now let $\widetilde{w}:=-w$, which satisfies
  \begin{subequations}\label{parabolicPDE-d'}
\begin{align}
\widetilde{w}_t(y,t) =&\divi\   (a (y,t)\nabla \widetilde{w}(y,t) ) +c(y,t) \phi(-\widetilde{w} (y,t)) +{h(y,t,-\widetilde{w}(y,t) )}\notag\\
&  -f(y,t),(y,t)\in Q_{\infty},\\
\widetilde{w}(y,t) =&-d_1(y,t),(y,t)\in \Gamma_1\times \mathbb{R}_{>0},\\
 a(y,t) \frac{\partial \widetilde{w}(y,t)}{\partial \bm{\nu}} =&\varphi(-\widetilde{w}(y,t))  -d_2(y,t),(y,t)\in \Gamma_2\times  \mathbb{R}_{>0},\\
\widetilde{w}(y,0)=&-w_0(y),y\in\Omega,
\end{align}
\end{subequations}

 Note that
 \begin{align*}
 g(\widetilde{w}-M)=&0 \  \text{on}\  \Gamma_1\times (0,T)
 \end{align*}
 and
\begin{align*}
    g(\widetilde{w}-M)\frac{\partial \widetilde{w}}{\partial \bm{\nu}}
   =& g(\widetilde{w}-M)(\varphi (-\widetilde{w}) -d_2)\notag\\
    \leq&  g(\widetilde{w}-M)\left( \varphi (-\widetilde{w}) +\|d_2\|_{\infty,T}\right)\notag\\
     \leq&  g(\widetilde{w}-M)\left( \varphi (-\widetilde{w}) +\varphi (M)\right)\  \text{on}\   \Gamma_2\times (0,T).
 \end{align*}
Furthermore, in view of the nonnegativity of $g$ and the degeneracy property \textbf{(G1)},   the condition of $\varphi $ (see \eqref{condition-varphi-2}) and the increasing monotonicity of $\varphi$ imply that
 \begin{align*}
    g(\widetilde{w}-M)\frac{\partial \widetilde{w}}{\partial \bm{\nu}}
       \leq&  g(\widetilde{w}-M)\left( \varphi (-\widetilde{w}) +\varphi (M)\right)
       \notag\\
       \leq &  g(\widetilde{w}-M)\left( -\varphi (\widetilde{w}) +\varphi (M)\right)\\
    \leq&     0 \  \text{on}\   \Gamma_2\times (0,T).
 \end{align*}

Similarly, it holds that
 \begin{align*}
   \int_{\Omega} g(\widetilde{w}-M)(c\phi(-\widetilde{w})+ f)\diff y
  \leq&   \int_{\Omega} g(\widetilde{w}-M)\left(-c\phi(\widetilde{w})+  c_0\phi\left(M\right)\right)\diff y\notag\\
   \leq&  -c_0\int_{\Omega} g(\widetilde{w}-M)\left( \phi(\widetilde{w})-  \phi\left(M\right)\right)\diff y\notag\\
     =&  -c_0\int_{\Omega}  g(\widetilde{w}-M)\phi'(v)\left( \widetilde{w}-  M\right)\diff y\notag\\
 \leq &-c_0(p+1)\int_{\Omega} G(\widetilde{w}-M) \diff y,\forall t\in (0,T),
\end{align*}
where $v$ is a function between $\widetilde{w}$ and $M$.

In addition, due to the fact that $
 h(y,t,v)  \geq  0 $   for all $(y,t,v)\in\Omega\times(0,T)\times\mathbb{R}_{\leq 0}
$, we have
\begin{align*}
  g(\widetilde{w}-M) h(y,t,-\widetilde{w})  \leq &0,\forall (y,t,w)\in\Omega\times(0,T)\times\mathbb{R} .
 \end{align*}
 Analogous to \eqref{parabolic-16}, for any $t\in(0,T)$, we have
 \begin{align}
 \dot{V}(\widetilde{w}-M)
\leq &  -c_0(p+1)  \int_{\Omega} G(\widetilde{w}-M)     \diff y \notag\\
=& -c_0(p+1) V(\widetilde{w}-M), \label{parabolic-17}
\end{align}
 which, along with \eqref{parabolic-16}, indicates that
 the condition (iii') of Proposition~\ref{Prop. 2.2} is fulfilled   with $x:=w$, $u:=(f,d_0,d_1)$, $z_{u,T}:=M$, and $\rho\left(\sup_{t\in(0,T)}\|u(t)\|_{ {U}}\right):=M$.

Applying Proposition~\ref{Prop. 2.2} and Remark~\ref{Remark5}, we deduce that the functional $ \widehat{V} (w)$ defined by \eqref{GISS-LF-V} is a GISS-LF for system~\eqref{parabolicPDE-d} and hence,  system~\eqref{parabolicPDE-d} is ISS in the   $L^{p+1}$-norm with arbitrary $p\in (1,\infty)$.

 Furthermore, we deduce by \eqref{Equ.11}   that
 \begin{align*}
  \psi_2\left( \|  w(\cdot,T) \|_{L^{p+1}(\Omega)}\right)\leq &2 e^{-c_0(p+1)T}\psi_1\left(2\|  w_0 \|_{L^{p+1}(\Omega)}\right) +   2\psi_1(2M)+\mu(M),
\end{align*}
which implies that
  \begin{align*}
  \|  w(\cdot,T) \|_{L^{p+1}(\Omega)}
   \leq & \Big( 4  \|  w_0 \|_{L^{p+1}(\Omega)} e^{-c_0 T}  + 2M+ \left(  p+1  \right)^{\frac{1}{p+1}}2M\Big)\notag\\
  \leq& 4   \|  w_0 \|_{L^{p+1}(\Omega)} e^{-c_0 T}+8M.
\end{align*}
Thus,  system~\eqref{parabolicPDE-d} is ISS in the   $L^{p+1}$-norm.

Furthermore,  due to the arbitrariness of $p\in(1,\infty)$, we conclude that  system~\eqref{parabolicPDE-d} is ISS in the   $L^{q}$-norm for any $q\in[2,\infty]$, having the estimate \eqref{q-estimate}.
\end{proof}
\begin{remark} Usually  there is no need to verify  each condition of Proposition~\ref{Prop. 2.2} (or conditions~(i)-(iii) in Definition~\ref{Def:GISS-LF}) so as to obtain the ISS  of a PDE. Indeed, one may directly compute $\dot{\widehat{V}} (x,u)$   and using the Gronwall's inequality  to establish an explicit  ISS  estimate, where $x$ and $u$ denote   the state and input of  the considered PDE   under an abstract form.
\end{remark}

 \section{ISS analysis     for {$1$-D first order hyperbolic}  equations via GLM} \label{GLF-hyperbolic}
 In this section, we employ the GLM to establish  ISS estimates
for the following  $1$-D hyperbolic equation:
 \begin{subequations}\label{first hyperbolic PDE}
 \begin{align}
 \rho_t(y,t)+\left(\lambda(W(t))\rho(y,t)\right)_y=&0,  (y,t)\in (0,1)\times\mathbb{R}_{>0},\\
 \rho(0,t)-k\rho(1,t)=&d(t),t\in \mathbb{R}_{>0},\\
 \rho(y,0)=&\rho_0(y),y\in(0,1),
 \end{align}
 \end{subequations}
where  $\lambda\in C^1(\mathbb{R};\mathbb{R}_{>0})$, $k\in(-1,1)$ is a constant,  $d,\rho_0$ are smooth functions, and $W(t):=\int_0^1\rho(y,t)\diff y$.

 Equation~\eqref{first hyperbolic PDE}   is commonly used to model semiconductor manufacturing systems; see, e.g., \cite{Armbruster:2006,Herty:2007,Marca:2010}.  The key feature of this PDE is characterized by the velocity function $\lambda$ depending on the total mass $W(t)$.

We propose either one of the following assumptions on  $\lambda$:
\begin{enumerate}
\item[\textbf{(A1)}]   $\lambda(s)\geq \lambda_0$ for all $s\in \mathbb{R}$, where  $\lambda_0$ is  a positive constant;
    \item[\textbf{(A2)}]   $\lambda(s)\geq \lambda(|s|)$ for all $s\in \mathbb{R}$ and $\lambda(s)$ is decreasing in $s\in \mathbb{R}_{\geq 0}$.
     \end{enumerate}
     Typical forms of $\lambda(s)$ satisfying  \textbf{(A2)} include
      \begin{align*}
     \lambda(s):=Ae^{-s^2}+B,\forall s\in  \mathbb{R}_{\geq 0} ,
    \end{align*}
     and
    \begin{align*}
     \lambda(s):=\frac{A}{B+s},\forall s\in  \mathbb{R}_{\geq 0} ,
    \end{align*}
     where $A$ and $B$ are positive constants.

For any $p\in(1,\infty)$, let $g$ and $G$ be defined by \eqref{Def:G}.
For any $T>0$, let
\begin{align*}
  M:=   \frac{1}{1-|k|}\sup_{t\in  (0,T)}|d (t)|
  =
     \left\{\begin{aligned}
& \frac{1}{1-k}\sup_{t\in  (0,T)}|d (t)|,\ k\in[0,1),\\
& \frac{1}{1+k}\sup_{t\in  (0,T)}|d (t)|,\ k\in(-1,0).
\end{aligned}\right.
\end{align*}

Let
\begin{align*}
V(\rho):= \int_0^1e^{-r  y}G(\rho(y,t))\diff y,
\end{align*}
where $r$ is a positive constant satisfying
  \begin{align}
 0< r\leq  (p+1)\ln \frac{1}{|k|}.\label{condition-sigma}
\end{align}
 Let
  \begin{align}
\widehat{V}(\rho):= V (\rho -M) +V (-\rho -M).\label{GISS-LF-V-2}
\end{align}

The first result for system~\eqref{first hyperbolic PDE} is  regarded as a global ISS property.

 \begin{proposition}\label{Prop. PDE2-1} Under the assumption \textbf{(A1)}, the following statements hold true.
 \begin{enumerate}[(i)]
\item
 For  any $p\in (1,\infty)$ and $r \in \mathbb{R}_{>0}$ satisfying \eqref{condition-sigma}, the functional $ \widehat{V} (\rho)$ defined by \eqref{GISS-LF-V-2} is a GISS-LF of system~\eqref{first hyperbolic PDE} and thus,   system~\eqref{first hyperbolic PDE} is ISS in the   $L^{p+1}$-norm. In particular, system~\eqref{first hyperbolic PDE}  admits the following   estimate:
  \begin{align}
  \|\rho(\cdot,t)\|_{L^{p+1}(0,1)}
  \leq &   2 e^{\frac{r}{p+1}}\|\rho_0 \|_{L^{p+1}(0,1)} e^{-\frac{r  \lambda_0}{p+1} t}
 + 2  \sup_{s\in(0,t)} |d (s)|     ,\forall t\in \mathbb{R}_{>0}.\label{PDE2-ISS-1}
\end{align}
\item  Furthermore,  for any $q\in[2,\infty]$, system~\eqref{first hyperbolic PDE} is ISS in the   $L^{q}$-norm and admits  the following 
    estimate:
 \begin{align}
  \|\rho(\cdot,t)\|_{L^{q}(0,1)}
 \leq &   \frac{ 2  }{|k|}   \|\rho_0 \|_{L^{q}(0,1)} |k|^{\frac{\lambda_0  t}{2}}
 +\frac{2}{1-|k|}\sup_{s\in  (0,t)}|d (s)| ,\forall t\in \mathbb{R}_{>0}.  \label{PDE2-ISS-2}
 \end{align}
 \end{enumerate}
 \end{proposition}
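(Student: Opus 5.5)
The plan is to bypass the abstract Definition~\ref{Def:GISS-LF} and, as in the proof of Proposition~\ref{Prop.PDE1}, compute $\dot{\widehat V}$ directly along classical solutions on an arbitrary interval $(0,T)$. The goal is to obtain $\dot{\widehat V}\le -r\lambda_0\widehat V$, which is condition (iii') of Proposition~\ref{Prop. 2.2} with $z_{u,T}:=M$, and then to convert this into \eqref{PDE2-ISS-1}.

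\textbf{Step 1: the derivative of $V(\rho-M)$.} Since $\lambda(W(t))$ does not depend on $y$, the equation reads $\rho_t=-\lambda(W(t))\rho_y$. Integrating by parts in $y$ gives
\begin{align*}
\dot V(\rho-M)=&-\lambda(W)\int_0^1 e^{-ry}\partial_y G(\rho-M)\diff y\\
=&-\lambda(W)\Big(e^{-r}G(\rho(1,t)-M)-G(\rho(0,t)-M)\Big)-r\lambda(W)V(\rho-M).
\end{align*}
For $\widetilde\rho:=-\rho$ we get the same identity, because $\widetilde\rho$ satisfies the same transport equation with $d$ replaced by $-d$.

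\textbf{Step 2: the boundary terms (main obstacle).} We must show
\begin{align*}
G(\rho(0)-M)+G(-\rho(0)-M)\le e^{-r}\Big(G(\rho(1)-M)+G(-\rho(1)-M)\Big).
\end{align*}
This is where the choice of $M$ and the bound \eqref{condition-sigma} on $r$ enter. Insert $\rho(0)=k\rho(1)+d$ and use $|d|\le(1-|k|)M$.
\begin{itemize}
\item If $k\ge0$, then $\rho(0)-M\le k\rho(1)-kM=k(\rho(1)-M)$. By the monotonicity of $G$ and its $(p+1)$-homogeneity on $\mathbb{R}_{\ge0}$ (with \textbf{(G1)} for the negative part), $G(\rho(0)-M)\le k^{p+1}G(\rho(1)-M)$. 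The same argument gives $G(-\rho(0)-M)\le k^{p+1}G(-\rho(1)-M)$.
\item If $k<0$, the terms cross: $\rho(0)-M\le |k|(-\rho(1)-M)$ and $-\rho(0)-M\le|k|(\rho(1)-M)$. The sum is still bounded by $|k|^{p+1}$ times the sum at $y=1$.
\end{itemize}
In both cases, \eqref{condition-sigma} gives $|k|^{p+1}\le e^{-r}$, so the boundary contribution is nonpositive (since $\lambda>0$). If $k=0$, the left-hand side vanishes for every $r>0$. Hence, by \textbf{(A1)},
\begin{align*}
\dot{\widehat V}\le -r\lambda(W)\widehat V\le -r\lambda_0\widehat V \quad\text{on } (0,T).
\end{align*}

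\textbf{Step 3: the estimates.} By the comparison principle, $\widehat V(\rho(\cdot,T))\le e^{-r\lambda_0T}\widehat V(\rho_0)$. We sandwich the weighted functional using $e^{-r}\int_0^1 G\le V\le\int_0^1 G$. For $M\ge0$ we have $G(\rho-M)+G(-\rho-M)=G(|\rho|-M)$, so on the one hand $G(|\rho|)\le 2^p\big(G(|\rho|-M)+G(M)\big)$ by \textbf{(G6)}, and on the other $G(|\rho_0|-M)\le G(|\rho_0|)$. Together these give
\begin{align*}
\|\rho(\cdot,T)\|^{p+1}_{L^{p+1}}\le 2^p\Big(e^{r}e^{-r\lambda_0T}\|\rho_0\|^{p+1}_{L^{p+1}}+M^{p+1}\Big).
\end{align*}
Taking $(p+1)$-th roots and using $(a+b)^{1/(p+1)}\le a^{1/(p+1)}+b^{1/(p+1)}$ together with $2^{p/(p+1)}\le2$, we obtain \eqref{PDE2-ISS-1} with the constant $2M=\frac{2}{1-|k|}\sup|d|$ multiplying the input term. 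Since $T$ is arbitrary, this is the estimate for all $t$. Moreover, (ii) of Corollary~\ref{Corollary2} holds by \textbf{(G4)}--\textbf{(G7)}, which yields the GISS-LF claim in part (i).

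\textbf{Part (ii).} For $k\ne0$ take the extreme choice $r=(p+1)\ln\frac1{|k|}$. Then $e^{r/(p+1)}=\frac1{|k|}$ and $e^{-r\lambda_0t/(p+1)}=|k|^{\lambda_0t}\le|k|^{\lambda_0t/2}$. This gives \eqref{PDE2-ISS-2} for $q=p+1\in(2,\infty)$ with constants independent of $p$. For $k=0$ the decay term can be handled separately or by letting $k\to0$ in the constants. The endpoint $q=\infty$ follows by letting $p\to\infty$, and $q=2$ by letting $p\to1^+$, using convergence of $L^q$ norms on $(0,1)$ for smooth functions.
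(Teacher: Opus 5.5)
Your argument is essentially the paper's proof. It uses the same weighted integration by parts giving $\dot{\widehat V}\le -r\lambda(W)\widehat V$, the same case split $k\ge0$/$k<0$ with $|d|\le(1-|k|)M$ and $|k|^{p+1}\le e^{-r}$ for the boundary terms, and the same \textbf{(G5)}/\textbf{(G6)} sandwich; for (ii) you take the endpoint $r=-(p+1)\ln|k|$ of the paper's range and then let $p\to1^+$ and $p\to\infty$. Your input term $2M=\frac{2}{1-|k|}\sup|d|$ is exactly what the paper's own bound \eqref{PDE2-27} delivers, and you were right not to claim the $2\sup_{s}|d(s)|$ printed in \eqref{PDE2-ISS-1}: with $\lambda\equiv1$, $k\in(\frac12,1)$, $d\equiv\delta$ and the steady state $\rho_0\equiv\rho\equiv\frac{\delta}{1-k}$, that printed bound fails for large $t$.
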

\begin{remark}
Note that $|k|^{\frac{\lambda_0  t}{2}}=e^{-k_0t}$ with $k_0:=\frac{\lambda_0}{2}\ln \frac{1}{|k|}>0$. Thus, similarly to \eqref{PDE2-ISS-1},  estimate \eqref{PDE2-ISS-2} also yields the exponential ISS  for   system~\eqref{first hyperbolic PDE}.
\end{remark}
Using the GLM, we may establish  local ISS estimates   for system~\eqref{first hyperbolic PDE} with a general $\lambda$.
 \begin{proposition}\label{Prop. PDE2-2}   Under the assumption \textbf{(A2)}, the following statements hold true.
 \begin{enumerate}[(i)]
\item
 For  any $p\in (1,\infty)$ and $r \in \mathbb{R}_{>0}$ satisfying \eqref{condition-sigma},  system~\eqref{first hyperbolic PDE} is LISS in the   $L^{p+1}$-norm, having the following   estimate:
  \begin{align*}
 \|\rho(\cdot,t)\|_{L^{p+1}(0,1)}
  \leq &   2 e^{\frac{r}{p+1}}\|\rho_0 \|_{L^{p+1}(0,1)} e^{-\frac{r  \Lambda_0}{p+1} t} + 2  \sup_{s\in  (0,t)}|d (s)|   ,\forall t\in \mathbb{R}_{>0},
\end{align*}
provided that
\begin{align}
  \|\rho_0 \|_{L^{p+1}(0,1)}  +   \sup_{t\in  \mathbb{R}_{\geq 0}}|d (t)|\leq R_0\label{PDE2-31}
 \end{align}
 with some positive constant $R_0$,
where
\begin{align}
 \Lambda_0:= \lambda\left( 2R_0\max\left\{\frac{ 1   }{|k|}, \frac{ 1   }{1-|k|} \right\}\right)>0.\label{Lambda}
\end{align}
\item  Furthermore, for any $q\in[2,\infty]$, system~\eqref{first hyperbolic PDE} is LISS in the   $L^{q}$-norm, having the following  estimate:
 \begin{align*}
  \|\rho(\cdot,t)\|_{L^{q}(0,1)}
 \leq &   \frac{ 2  }{|k|}   \|\rho_0 \|_{L^{q}(0,1)} |k|^{\frac{\Lambda_0  t}{2}}   +\frac{2}{1-|k|}\sup_{s\in  (0,t)}|d (s)| ,\forall t\in \mathbb{R}_{>0},
 \end{align*}
 provided that
\begin{align*}
  \|\rho_0 \|_{L^{q}(0,1)}  +   \sup_{t\in  \mathbb{R}_{\geq 0}}|d (t)|\leq R_0
 \end{align*}
 with some positive constant $R_0$,
 where $\Lambda_0$ is the same as in \eqref{Lambda}.
  \end{enumerate}
 \end{proposition}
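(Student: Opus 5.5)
The plan is to repeat the GISS-LF computation behind Proposition~\ref{Prop. PDE2-1}. The only change is that the uniform lower bound $\lambda_0$ from \textbf{(A1)} is replaced by $\Lambda_0$. This lower bound on $\lambda(W(t))$ is justified a posteriori by a bootstrap (continuity) argument that uses \textbf{(A2)} and the smallness condition \eqref{PDE2-31}.

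\emph{Step 1 (dissipation inequality with a time-dependent rate).} Fix $T>0$, $p\in(1,\infty)$, and $r$ satisfying \eqref{condition-sigma}. Since $\lambda(W(t))$ does not depend on $y$, the equation reads $\rho_t=-\lambda(W)\rho_y$. Hence
\begin{align*}
\dot V(\rho-M)=-\lambda(W)\int_0^1 e^{-ry}\big(G(\rho-M)\big)_y\diff y
=-\lambda(W)\Big[e^{-r}G(\rho(1,t)-M)-G(\rho(0,t)-M)\Big]-r\lambda(W)V(\rho-M),
\end{align*}
and the analogous identity holds for $\dot V(-\rho-M)$. The boundary terms are controlled through the boundary condition and the choice of $M$:
\[
\rho(0,t)-M=k\rho(1,t)+d(t)-M\le |k||\rho(1,t)|+(1-|k|)M-M=|k|\big(|\rho(1,t)|-M\big).
\]
The same bound holds for $-\rho(0,t)-M$. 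By monotonicity of $G$ and its homogeneity on $\mathbb{R}_{\ge0}$, and then by \textbf{(G5)} with $\tau=-M$, we get
\[
G(\pm\rho(0,t)-M)\le |k|^{p+1}G(|\rho(1,t)|-M)\le e^{-r}\big(G(\rho(1,t)-M)+G(-\rho(1,t)-M)\big).
\]
Here $|k|^{p+1}\le e^{-r}$ by \eqref{condition-sigma}. This is a per-term bound of size $e^{-r}\widehat{S}$, where $\widehat{S}:=G(\rho(1,t)-M)+G(-\rho(1,t)-M)$. Summing over both signs therefore bounds the total inflow by $2e^{-r}\widehat{S}$, which is twice the available outflow $e^{-r}\widehat{S}$. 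So summing the two per-term bounds is not enough by itself. I would try to fix this by replacing the summed bound with a case distinction on the sign of $\rho(1,t)$, or on the sign of $k$, to show that at most one of $G(\rho(0,t)-M)$, $G(-\rho(0,t)-M)$ is nonzero and that it is bounded by $e^{-r}\widehat{S}$. If that fails, the fallback is to sharpen the constant in $M$. Once the boundary terms are absorbed, $\dot{\widehat V}(\rho)\le -r\lambda(W(t))\widehat V(\rho)$ on $(0,T)$.

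\emph{Step 2 (bootstrap).} Set $B:=2R_0\max\{1/|k|,1/(1-|k|)\}$ and, for $\varepsilon>0$, let $t^*:=\sup\{t\in[0,T]:|W(s)|<B+\varepsilon \text{ for all } s\in[0,t]\}$. By Jensen/H\"older, $|W(0)|\le\|\rho_0\|_{L^{p+1}}\le R_0<B$, so $t^*>0$ by continuity of $W$. On $[0,t^*)$, assumption \textbf{(A2)} gives $\lambda(W)\ge\lambda(|W|)\ge\lambda(B+\varepsilon)=:\Lambda_\varepsilon$. The comparison principle then gives $\widehat V(\rho(t))\le e^{-r\Lambda_\varepsilon t}\widehat V(\rho_0)$. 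Exactly as in the proof of Proposition~\ref{Prop. 2.2}, using the weight bounds $e^{-r}\le e^{-ry}\le1$, this yields the stated $L^{p+1}$ estimate with $\Lambda_\varepsilon$ in place of $\Lambda_0$. Since $e^{r/(p+1)}\le 1/|k|$, the estimate implies $|W(t)|\le\|\rho(t)\|_{L^{p+1}}\le \frac{2}{|k|}\|\rho_0\|_{L^{p+1}}+2\sup|d|\le B$. This is strictly below $B+\varepsilon$, which forces $t^*=T$. Letting $\varepsilon\to0$ (using continuity of $\lambda$) and then using the arbitrariness of $T$ gives (i). If $k=0$, the constraint \eqref{condition-sigma} disappears and one reads $1/|k|$ as arbitrary.

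\emph{Step 3 ($L^q$ estimates).} Choose $r=(p+1)\ln\frac1{|k|}$, or half of it, to match the exponent $|k|^{\Lambda_0t/2}$. Keep the sharper additive constant $\frac{2}{1-|k|}\sup|d|$ coming from $M$, and let $p\to\infty$. Since the constants are uniform in $p$, passing to the limit gives (ii) for $q\in[2,\infty]$, in the same way as in Proposition~\ref{Prop. PDE2-1}(ii).

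The main obstacle I expect is Step 1: getting the boundary terms of the two truncated functionals to combine with the correct constant. The bootstrap of Step 2 is standard once Step 1 gives an inequality with a rate that depends only on a lower bound for $\lambda(W(t))$.
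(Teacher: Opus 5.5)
Your overall route is the paper's: the same $\widehat V$, the dissipation inequality $\dot{\widehat V}(\rho)\le -r\lambda(W)\widehat V(\rho)$, i.e.\ \eqref{36}, then an a priori bound $|W(t)|\le B:=2R_0\max\{1/|k|,1/(1-|k|)\}$, then $\lambda(W(t))\ge\Lambda_0$ via \textbf{(A2)} and Gronwall. The paper derives \eqref{36} in the proof of Proposition~\ref{Prop. PDE2-1} using only $\lambda>0$, not \textbf{(A1)}.

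\textbf{The gap.} As written, your Step 1 does not reach \eqref{36}. You end with an inflow bound $2e^{-r}\widehat S$ against an outflow $e^{-r}\widehat S$, where $\widehat S:=G(\rho(1,t)-M)+G(-\rho(1,t)-M)$, and you only sketch a repair. Your fallback of changing $M$ would not help: with the summed per-term bound, the ratio of inflow to outflow tends to $2|k|^{p+1}e^{r}$ as $|\rho(1,t)|\to\infty$, independently of $M$. Your first suggested repair is the right one, and it needs no case distinction on the sign of $\rho(1,t)$ or of $k$. Because $M\ge 0$ and $G$ vanishes on $\mathbb{R}_{\le 0}$, the numbers $G(s-M)$ and $G(-s-M)$ are never both positive. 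Hence $G(s-M)+G(-s-M)=G(|s|-M)$ for every $s\in\mathbb{R}$; this is the identity the paper uses in \eqref{PDE2-26}. Apply it at $s=\rho(0,t)$ and $s=\rho(1,t)$, together with your bound $|\rho(0,t)|-M\le |k|(|\rho(1,t)|-M)$:
\begin{align*}
G(\rho(0,t)-M)+G(-\rho(0,t)-M)=G(|\rho(0,t)|-M)\le |k|^{p+1}G(|\rho(1,t)|-M)=|k|^{p+1}\widehat S\le e^{-r}\widehat S .
\end{align*}
Multiplying by $\lambda(W)>0$ absorbs the boundary terms for all $k\in(-1,1)$ at once. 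The paper instead splits into $k\ge 0$ and $k<0$ and matches each inflow term with one outflow term: $G(k\rho(1,t)+d(t)-M)\le k^{p+1}G(\rho(1,t)-M)$ for $k\ge 0$, and $\le |k|^{p+1}G(-\rho(1,t)-M)$ for $k<0$. Your completed version is a slightly shorter one-case argument.

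\textbf{Step 2 is redundant.} Your bootstrap is correct but unnecessary. The bound $|W(t)|\le B$ you extract only uses $e^{-r\Lambda_\varepsilon t}\le 1$. Since $\lambda>0$ everywhere, \eqref{36} already gives $\widehat V(\rho(t))\le \widehat V(\rho_0)$ for all $t$ (see \eqref{hatV-W(t)}). Hence $|W(t)|\le B$ globally and $\lambda(W(t))\ge \lambda(B)=\Lambda_0$ directly, with no $t^*$ and no limit $\varepsilon\to 0$. This is exactly the paper's proof.

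\textbf{Constants.} The computation \eqref{PDE2-26}--\eqref{PDE2-27} yields the additive term $2^{p/(p+1)}M\le \frac{2}{1-|k|}\sup|d|$, not $2\sup|d|$; the paper's statement of (i) has the same slip. So in Step 3, $\frac{2}{1-|k|}\sup|d|$ is the constant you are forced to keep, not a sharper one. This does not affect $|W|\le B$. Also, for $q=2$ you need the limit $p\to 1^{+}$, not only $p\to\infty$.
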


Now, we employ the GLM to prove Proposition~\ref{Prop. PDE2-1} and Proposition~\ref{Prop. PDE2-2}, respectively.

\begin{proof2} 
We proceed the proof with three steps.

\textbf{Step 1:} We show that the functional $ \widehat{V} (\rho)$ defined by \eqref{GISS-LF-V-2} is a GISS-LF for system~\eqref{first hyperbolic PDE}.

First,   analogous to \cite[Proposition 2.]{Zheng:2025},  it is direct to verify that the conditions (i) and (ii) of Corollary~\ref{Corollary2} are fulfilled. Next, for  any $t\in(0,T)$, we claim that
\begin{align}
  \dot{\widehat{V}}(\rho)
 \leq&
  -r \lambda(W)\widehat{V}(\rho) \label{36}\\
  \leq&  -r \lambda_0\widehat{V}(\rho)
   \forall t\in (0,T),
   \label{PDE2-hatV''}
\end{align}
which, along with  the conditions (i) and (ii) of Corollary~\ref{Corollary2}, ensures that  $ \widehat{V} (\rho)$ is a GISS-LF.

Now we prove \eqref{PDE2-hatV''}. Indeed, for any $t\in(0,T)$, using integrating by  parts, we obtain
\begin{align}
 \dot{V}(\rho-M)
=&\int_0^1e^{-r y}g(\rho-M)\rho_t\diff y\notag\\
= &-\int_0^1e^{-r y}g(\rho-M) \left(\lambda(W)\rho(y,t)\right)_y \diff y\notag\\
=&-\lambda(W)\int_0^1e^{-r y}\left(G(\rho-M) \right)_y \diff y\notag\\
=&-\lambda(W)\left[e^{-r y}G(\rho-M)\right]_{y=0}^{y=1} -r \lambda(W)\int_0^1e^{-r y} G(\rho-M) \diff y\notag\\
=&-\lambda(W)\left(e^{-r}G(\rho(1,t)-M)- G(\rho(0,t)-M)\right) -r \lambda(W)V(\rho-M)\notag\\
=&- \lambda(W) e^{-r}G(\rho(1,t)-M)    +\lambda(W)G(k\rho(1,t)-M+d(t)) -r \lambda(W)V(\rho-M).\label{PDE2-V1}
\end{align}

Analogously,   we have
\begin{align}
  \dot{V}(-\rho-M)
=&- \lambda(W) e^{-r}G(-\rho(1,t)-M)  +\lambda(W)G(-k\rho(1,t)-M-d(t)) \notag\\
 &-r \lambda(W)V(-\rho-M), 
 \forall t\in (0,T).\label{PDE2-V2}
\end{align}
Combining  \eqref{PDE2-V1} and \eqref{PDE2-V2}, we have
\begin{align}
 \dot{\widehat{V}}(\rho)
 \leq
 & \lambda(W) G(k\rho(1,t)-M+d(t)) +\lambda(W)G(-k\rho(1,t)-M-d(t))  - \lambda(W) e^{-r}  G(\rho(1,t)-M) \notag\\
  &- \lambda(W) e^{-r}  G(-\rho(1,t)-M)    -r \lambda(W)\widehat{V}(\rho),\forall t\in (0,T).\label{PDE2-hatV}
\end{align}

Now we   prove \eqref{PDE2-hatV''} in two cases: $k\in[0,1)$ and $k\in(-1,0)$.

  \emph{Case 1:  $k\in[0,1)$.}  Note that $G$ has the following property:
\begin{align}\label{G-m}
G(ms) =  m^{p+1}G(s),\ \forall m\in\mathbb{R}_{\geq 0},  s\in\mathbb{R}.
\end{align}
 Using the definition of $M$, the monotonicity of $G$, and    \eqref{G-m}, we have
\begin{align}
  G(k\rho(1,t)-M+d(t))
 \leq &  G(k\rho(1,t)-M+|d(t)|)\notag\\
 \leq &
 G(k\rho(1,t)-M+(1-k)M) \notag\\
= &G( k(\rho(1,t)-  M) )\notag\\
 =&k^{p+1}G (\rho(1,t)-  M),\forall t\in (0,T),\label{PDE2-34}
 \end{align}
 and
 \begin{align}
 G(-k\rho(1,t)-M-d(t))
 \leq &  G(-k\rho(1,t)-M+|d(t)|)\notag\\
\leq &
   G(-k\rho(1,t)-M+(1-k)M) \notag\\
= &     G(k(-\rho(1,t) - M)) \notag\\
= &k^{p+1}  G(-\rho(1,t) - M) ,\forall t\in (0,T).\label{PDE2-35}
\end{align}

Putting  \eqref{PDE2-34} and \eqref{PDE2-35} into \eqref{PDE2-hatV}, we get
\begin{align}
  \dot{\widehat{V}}(\rho)
 \leq
 & \lambda(W)  \left(k^{p+1}-e^{-r}\right)G( \rho(1,t)-M)   +\lambda(W)   \left(k^{p+1}-e^{-r}\right)G( -\rho(1,t)-M)  \notag\\
  & -r \lambda(W)\widehat{V}(\rho),\forall t\in (0,T),
   \forall t\in (0,T).\label{PDE2-hatV'}
\end{align}
 Note that the choice of $r$ (see \eqref{condition-sigma}) implies that
\begin{align*}
k^{p+1}- e^{-r}\leq 0 .
\end{align*}
  Then, \eqref{PDE2-hatV'} leads to \eqref{PDE2-hatV''}.

 \emph{Case 2:  $k\in(-1,0)$.} Analogous in Case 1, using the definition of $M$, the monotonicity of $G$, and   \eqref{G-m}, we have
\begin{align}
  G(k\rho(1,t)-M+d(t))
 \leq &
 G(k\rho(1,t)-M+(1-|k|)M) \notag\\
= &G( k(\rho(1,t)+  M) )\notag\\
 =&(-k)^{p+1}G (-\rho(1,t)- M),\forall t\in (0,T),\label{PDE2-34'}
 \end{align}
 and
 \begin{align}
  G(-k\rho(1,t)-M-d(t))
\leq &
   G(-k\rho(1,t)-M+(1-|k|)M) \notag\\
 = &     G(k(-\rho(1,t) + M)) \notag\\
= &(-k)^{p+1}  G( \rho(1,t) - M) ,\forall t\in (0,T).\label{PDE2-35'}
\end{align}

Putting  \eqref{PDE2-34'} and \eqref{PDE2-35'} into \eqref{PDE2-hatV}, we get
\begin{align}
  \dot{\widehat{V}}(\rho)
 \leq
 & \lambda(W)  \left((-k)^{p+1}-e^{-r}\right)G( \rho(1,t)-M)    +\lambda(W)   \left((-k)^{p+1}-e^{-r}\right)G( -\rho(1,t)-M)  \notag\\
  & -r \lambda(W)\widehat{V}(\rho),\forall t\in (0,T),
   \forall t\in (0,T).\label{PDE2-hatV'''}
\end{align}
 Note that the choice of $r$ (see \eqref{condition-sigma}) implies that
\begin{align*}
(-k)^{p+1}=|k|^{p+1}\leq e^{-r} .
\end{align*}
  Then, \eqref{PDE2-hatV'''} leads to \eqref{PDE2-hatV''}. We conclude that \eqref{PDE2-hatV''} holds true for all $k\in(-,1,1)$.

\textbf{Step 2:} We prove the ISS estimate \eqref{PDE2-ISS-1}.
  Indeed, applying  the Gronwall's inequality to \eqref{PDE2-hatV''}, we obtain
\begin{align}
 {\widehat{V}}( \rho(y,T) )
 \leq&
 e^{-r  \lambda_0 T}\widehat{V}(\rho_0(y)).\notag
\end{align}
Therefore,  using \textbf{(G6)} and \textbf{(G5)},  we have
\begin{align}
 \frac{1}{p+1}\int_0^1|\rho(y,T)|^{p+1}\diff y
 = &  \int_0^1G\left(|\rho(y,T)|\right)\diff y\notag\\
\leq &2^{p}\int_0^1G\left(|\rho(y,T)|-M  \right)\diff y+2^{p}G(M)\notag\\
= &2^{p}\int_0^1\left(G\left( \rho(y,T)-M  \right)+G\left(- \rho(y,T)-M  \right)\right)\diff y  +2^{p}G(M)\notag\\
\leq  &2^{p}e^{r}\left(V\left( \rho(y,T)-M  \right)+ V\left( -\rho(y,T)-M  \right)\right)+2^{p}G(M)\notag\\
=&2^{p}e^{r}{\widehat{V}}( \rho(y,T) )+2^{p}G(M)\notag\\
\leq &2^{p}e^{r}e^{-r  \lambda_0 T}\widehat{V}(\rho_0(y))+2^{p}G(M)\notag\\
=&2^{p} e^{r} e^{-r  \lambda_0 T}\int_0^1e^{-r  y} \left(G(\rho_0(y)-M)+G(-\rho_0(y)-M) \right) \diff y +2^{p}G(M)\notag\\
= &2^{p}e^{r}e^{-r  \lambda_0 T} \int_0^1e^{-r  y} G(|\rho_0(y)|-M) \diff y +2^{p}G(M)\notag\\
\leq &2^{p}e^{r}e^{-r  \lambda_0 T} \int_0^1  G(|\rho_0(y)| ) \diff y +2^{p}G(M)\notag\\
=& \frac{2^{p}e^{r}}{p+1}e^{-r  \lambda_0 T} \int_0^1|\rho_0(y)|^{p+1}\diff y +\frac{2^{p}}{p+1}M^{p+1}, \label{PDE2-26}
\end{align}
which  implies that
\begin{align}
 \|\rho(\cdot,T)\|_{L^{p+1}(0,1)}
 \leq &  2^{\frac{p}{p+1}}e^{\frac{r}{p+1}} e^{-\frac{r  \lambda_0}{p+1} T}\|\rho_0 \|_{L^{p+1}(0,1)}  + 2^{\frac{p}{p+1}} M,\forall T\in \mathbb{R}_{>0}. \label{PDE2-27}
\end{align}
Therefore, the ISS estimate \eqref{PDE2-ISS-1} holds true.

\textbf{Step 3:} We prove the ISS estimate \eqref{PDE2-ISS-2}. Now let $r$  satisfy
\begin{align}
 0<-\frac{1}{2}(p+1)\ln |k|\leq r\leq -(p+1)\ln |k|,\label{condition-sigma'}
\end{align}
which ensures that
\begin{align*}
 e^{\frac{r}{p+1}} \leq   \frac{1}{|k|}   \ \ \text{and}\ \
  e^{-\frac{r}{p+1}} \leq    |k|^{\frac{1}{2}} .
\end{align*}
For any $T\in \mathbb{R}_{>0}$, it follows from \eqref{PDE2-27} that
\begin{align}
  \|\rho(\cdot,T)\|_{L^{p+1}(0,1)}
 \leq &   \frac{ 2   }{|k|}   |k|^{\frac{\lambda_0  T}{2}}  \|\rho_0 \|_{L^{p+1}(0,1)}  + 2  M .\label{PDE2-28}
\end{align}
Therefore,   the ISS estimate \eqref{PDE2-ISS-2} holds true for any $q:=p+1 \in (2,\infty)$.

letting $p\to 1$ and $p\to \infty$ in \eqref{PDE2-28} yields that the ISS estimate~\eqref{PDE2-ISS-2} holds for $q=2$ and $q=\infty$, respectively.
\end{proof2}

\begin{proof3}
 For any $T\in \mathbb{R}_{>0}$,    applying  the Gronwall's inequality to \eqref{36}, we obtain
\begin{align}
 {\widehat{V}}( \rho(y,T) ) \leq&  e^{-r \int_0^T\lambda(W(t))\diff t}\widehat{V}(\rho_0(y)).\label{hatV-W(t)}
\end{align}
 %
%
%
It follows that
\begin{align}
 {\widehat{V}}( \rho(y,T) ) \leq&   \widehat{V}(\rho_0(y)),
 \end{align}
which, along with similar derivations for \eqref{PDE2-26}, \eqref{PDE2-27}, and \eqref{PDE2-28}  (by setting $\lambda_0=0$ in \eqref{PDE2-26}, \eqref{PDE2-27}, and \eqref{PDE2-28}),  leads
 to
 \begin{align*}
 \|\rho(\cdot,T)\|_{L^{p+1}(0,1)}
 \leq &  \frac{ 2   }{|k|}  \|\rho_0 \|_{L^{p+1}(0,1)}  + 2  M .
\end{align*}
It follows that
\begin{align*}
|W(T)|
 \leq &   \|\rho(\cdot,T)\|_{L^{p+1}(0,1)}\notag\\
 \leq &  \frac{ 2   }{|k|}      \|\rho_0 \|_{L^{p+1}(0,1)}  +   \frac{2}{1-|k|}\sup_{t\in  (0,T)}|d (t)|.
 \end{align*}
 Thus, for $\rho_0$ and $d$ satisfying \eqref{PDE2-31}, i.e.,
 \begin{align*}
  \|\rho_0 \|_{L^{p+1}(0,1)}  +   \sup_{t\in  \mathbb{R}_{\geq 0}}|d (t)|\leq R_0
 \end{align*}
 with some positive constant $R_0$, it holds that
 \begin{align*}
|W(T)|
  \leq &  2R_0\max\left\{\frac{ 1   }{|k|}, \frac{ 1   }{1-|k|} \right\}  .
 \end{align*}
 According to the assumption \textbf{(A2)}, we have
 \begin{align*}
\lambda(W(T))\geq &\lambda(|W(T)|)\notag\\
  \geq &  \Lambda_0:= \lambda\left( 2R_0\max\left\{\frac{ 1   }{|k|}, \frac{ 1   }{1-|k|} \right\}\right).
 \end{align*}
 Then, \eqref{hatV-W(t)} becomes
 \begin{align*}
 {\widehat{V}}( \rho(y,T) ) \leq&  e^{-r \int_0^T\lambda(W(t))\diff t}\widehat{V}(\rho_0(y)) \\
 \leq & e^{-r \int_0^T\Lambda_0 \diff t}\widehat{V}(\rho_0(y))\notag
 \\
 =& e^{-r  \Lambda_0 T}\widehat{V}(\rho_0(y)).\notag
\end{align*}
 The remaining part of the proof is exactly the same as the proof of Proposition~\ref{Prop. PDE2-1}.
\end{proof3}

\section{ISS analysis for  $1$-D wave  equations via GLM}\label{GLF-wave}
In this section, we  employ the GLM to establish  ISS  estimates for  the following  $1$-D wave equation  with  boundary velocity damping:
 \begin{subequations}\label{second hyperbolic PDE}
 \begin{align}
w_{tt}(y,t)=&c^2w_{yy}(y,t) +f(y,t),  (y,t)\in (0,1)\times\mathbb{R}_{>0},\\
 w(0,t)=&0,t\in \mathbb{R}_{>0},\\
 w_y(1,t)=&-kw_t(1,t)+d(t),t\in \mathbb{R}_{>0},\\
 w(y,0)=&w_0(y), y\in(0,1),\\
  w_t(y,0)=&\phi_0(y), y\in(0,1),
 \end{align}
 \end{subequations}
 where $ c$ and $k$ are positive constants satisfying
 \begin{align*}
 ck=1,
 \end{align*}
 $f,d:\mathbb{R}_{>0}\to \mathbb{R}$ are smooth functions, representing distributed in-domain and boundary disturbances, respectively, and  $w_0$ and $ \phi_0$ are smooth functions, representing the initial data.

It is worth mentioning  that the CLM has been employed to establish the ISS in the norm   of  Hilbert spaces for the $1$-D wave equation with different dampings; see \cite{Karafyllis:202231,Karafyllis:2023}. In this paper, we  show how to use the GLM  to establish the ISS in the norm of Banach spaces for the wave equation with  boundary velocity damping.

For any $p\in(1,\infty)$, let $g$ and $G$ be defined by \eqref{Def:G}.
For any $T>0$, let
\begin{align*}
  M:=   \frac{1}{c}\sup_{t\in  (0,T)}|d (t)|.
\end{align*}

For $v\in C(\mathbb{R}_{>0};L^{p+1}(0,1))$, define
\begin{align*}
V_1 (v):= &\int_0^1e^{r  y}G(v(y,t))\diff y,\\
V_2(v):=&\int_0^1e^{-r  y}G(v(y,t))\diff y,
\end{align*}
where $r$ is a   positive constant.
 Let
  \begin{align*}
\widehat{V}(w):= &V_1 (w_t+cw_y -M) +V_1 (-w_t-cw_y -M) +V_2 (w_t-cw_y -M) +V_2 (-w_t+cw_y -M).
\end{align*}

\begin{proposition}The following statements hold  true.
\begin{enumerate}
\item[(i)]For any $p\in(1,\infty)$ and $r\in \mathbb{R}_{>0}$, $\widehat{V}(w)$ is a GISS-LF for system~\eqref{second hyperbolic PDE}  and thus,   system~\eqref{second hyperbolic PDE} is ISS in the   $L^{p+1}$-norm. Furthermore, for any $q\in[2,\infty)$, system~\eqref{second hyperbolic PDE} is ISS    in the $L^{q}$-norm, having the estimate:
 \begin{align}
   \|w_t(\cdot,T)\|_{L^{q}(0,1)}+\|w_y(\cdot,T)\|_{L^{q}(0,1)}
 \leq& 2^{\frac{q-1}{q}} \bigg[   2^{\frac{q+1}{q} } e^{\frac{2r-(cr-\varepsilon)T}{q}}\left(\|\phi_0\|_{L^{q}(0,1)}+\|w_{0y}\|_{L^{q}(0,1)}\right)\notag\\
     &+   \left(\frac{q-1}{\varepsilon}\right) ^{ \frac{q-1}{q}} \left(\frac{8e^{2r}}{cr-\varepsilon}\right)^{\frac{1}{q}} \sup_{(y,t)\in (0,1)\times (0,T)}|f(y,t)| \notag\\
     &
  +   \frac{2}{c}\sup_{t\in  (0,T)}|d (t)| \bigg],\forall T\in \mathbb{R}_{>0},\label{PDE3-ISS-1}
  \end{align}
where  $r,\varepsilon\in\mathbb{R}_{>0}$ are arbitrary constants satisfying $cr-\varepsilon>0$.
\item[(ii)]For any $q\in[2,\infty]$, system~\eqref{second hyperbolic PDE} is ISS    in the $L^{q}$-norm, having the estimate:
\begin{align}
   \|w_t(\cdot,t)\|_{L^{q}(0,1)}+\|w_y(\cdot,t)\|_{L^{q}(0,1)}
 \leq& 8 e^{\frac{4m}{c} }e^{-\frac{1}{2}mt}\left(\|\phi_0\|_{L^{q}(0,1)} +\|w_{0y}\|_{L^{q}(0,1)}\right) \notag\\
 &+  \frac{16}{m }  e^{\frac{4m}{c} } \sup_{(y,s)\in (0,1)\times (0,t)}|f(y,s)|\notag\\
  &+   \frac{4}{c}\sup_{s\in  (0,t)}|d (s)|  ,\forall t\in \mathbb{R}_{>0},\label{PDE3-ISS-2}
  \end{align}
where $m \in\mathbb{R}_{>0}$ is an arbitrary   constant.
  \end{enumerate}
\end{proposition}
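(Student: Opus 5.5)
The plan is to diagonalize the wave equation into Riemann invariants and then treat each invariant like the first order hyperbolic equation of Section~\ref{GLF-hyperbolic}. Set $u:=w_t+cw_y$ and $v:=w_t-cw_y$. A direct computation from \eqref{second hyperbolic PDE} gives
\begin{align*}
u_t-cu_y=f,\qquad v_t+cv_y=f \quad\text{in } (0,1)\times\mathbb{R}_{>0}.
\end{align*}
So $u$ is transported to the left and $v$ to the right; this is why $V_1$ (weight $e^{ry}$) is used for $u$ and $V_2$ (weight $e^{-ry}$) for $v$.

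\textbf{Boundary relations.} The Dirichlet condition gives $w_t(0,t)=0$, hence $v(0,t)=-u(0,t)$. The condition $ck=1$ turns the damped boundary condition into $cw_y(1,t)=-w_t(1,t)+cd(t)$, so $u(1,t)=c\,d(t)$. Thus the incoming value of $u$ at $y=1$ is pure disturbance. The truncation level $M$ must dominate $\sup_{t\in(0,T)}|u(1,t)|$; I will check the normalization of $M$ against $c\,d$ and adjust the constant if needed. With this, $G(\pm u(1,t)-M)=0$ by \textbf{(G1)}.

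\textbf{Derivative of $\widehat V$.} Integrating by parts,
\begin{align*}
\dot V_1(u-M)=&\;c\,e^{r}G(u(1,t)-M)-cG(u(0,t)-M)-cr\,V_1(u-M)\\
&+\int_0^1e^{ry}g(u-M)f\,\diff y,
\end{align*}
and the first term vanishes. Similarly,
\begin{align*}
\dot V_2(v-M)=&-c\,e^{-r}G(v(1,t)-M)+cG(v(0,t)-M)-cr\,V_2(v-M)\\
&+\int_0^1e^{-ry}g(v-M)f\,\diff y.
\end{align*}
The first term here is $\le 0$. Since $v(0,t)=-u(0,t)$, the term $+cG(v(0,t)-M)=cG(-u(0,t)-M)$ cancels exactly against $-cG(-u(0,t)-M)$ coming from $V_1(-u-M)$. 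Likewise $+cG(-v(0,t)-M)$ from $V_2(-v-M)$ cancels $-cG(u(0,t)-M)$. This pairing is the reason all four functionals appear in $\widehat V$, and verifying it is the key step.

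\textbf{In-domain term and Gronwall.} For the $f$-terms apply \textbf{(G8)}:
\begin{align*}
g(s)f\le \varepsilon G(s)+(p/\varepsilon)^pG(\|f\|_\infty),
\end{align*}
with the weights bounded by $e^{r}$. This yields
\begin{align*}
\dot{\widehat V}\le-(cr-\varepsilon)\widehat V+4e^{r}(p/\varepsilon)^pG\big(\sup|f|\big),
\end{align*}
which is the dissipative analogue of condition (iii') of Proposition~\ref{Prop. 2.2}. Gronwall's inequality then bounds $\widehat V(T)$ by $e^{-(cr-\varepsilon)T}\widehat V(0)+\frac{4e^r(p/\varepsilon)^p}{cr-\varepsilon}G(\sup|f|)$.

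\textbf{Recovering the state norms.} Next I bound $\int G(|u|)+\int G(|v|)$ from below by $\widehat V$ using \textbf{(G5)}, \textbf{(G6)} and the weights $e^{\pm ry}\ge e^{-r}$, which produces the factor $e^{2r}$. I bound $\widehat V(0)$ from above by the $L^{p+1}$ norms of $\phi_0\pm cw_{0y}$. Then I return to $w_t=(u+v)/2$ and $cw_y=(u-v)/2$ via Minkowski's inequality. Taking $(p+1)$-th roots gives \eqref{PDE3-ISS-1} with $q=p+1$; the case $q=2$ follows by letting $p\to1$.

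\textbf{Part (ii).} I fix $\varepsilon=cr/2$ and choose $r$ proportional to $m/c$ (e.g.\ $r=2m/c$) so that $e^{2r/q}\le e^{4m/c}$ and the decay rate is at least $m/2$. I then simplify $(\frac{q-1}{\varepsilon})^{(q-1)/q}(\cdot)^{1/q}$ uniformly in $q$, which gives a $q$-independent bound like $16e^{4m/c}/m$ after bounding the exponents. Letting $q\to\infty$ yields the $L^\infty$ case.

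\textbf{Main obstacle.} I expect the hardest part to be bookkeeping rather than structure: matching the constant $M$ to the boundary value $c\,d$, and making the $q$-uniform constants in (ii) come out as stated. The boundary cancellation itself should go through cleanly once the pairing above is used.
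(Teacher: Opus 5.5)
Your route is the paper's own: the same four truncated functionals in $u=w_t+cw_y$, $v=w_t-cw_y$, weights $e^{\pm ry}$ matched to the transport direction, the cancellation of the $y=0$ traces via $v(0,t)=-u(0,t)$, then \textbf{(G8)}, Gronwall, and recovery of the norms.

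\textbf{The normalization of $M$.} Your hesitation here is not bookkeeping; it is exactly where Step~2 of the paper's proof breaks. Since $u(1,t)=c\,d(t)$, the terms $e^{r}G(\pm c\,d(t)-M)$ vanish only if $c|d(t)|\le M$. So you need $M\ge c\sup_{(0,T)}|d|$. The paper instead takes $M=\frac1c\sup_{(0,T)}|d|$ and uses $G(c|d(t)|-M)\le G(0)$, which is false for $c>1$. No choice of constants recovers the printed gains. Take $f\equiv0$, $d\equiv d_0$, $\phi_0=0$, $w_0(y)=d_0y$. The steady state $w(y,t)=d_0y$ solves \eqref{second hyperbolic PDE}, and $\|w_t(\cdot,t)\|_{L^q(0,1)}+\|w_y(\cdot,t)\|_{L^q(0,1)}=|d_0|$ for all $t$. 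Meanwhile the right-hand sides of \eqref{PDE3-ISS-1} and \eqref{PDE3-ISS-2} tend to $2^{\frac{2q-1}{q}}|d_0|/c$ and $\frac4c|d_0|$ respectively, both smaller than $|d_0|$ once $c>4$.

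\textbf{The factor $\frac1c$ on the initial data.} Your step $w_y=(u-v)/(2c)$ necessarily puts a factor $\frac1c$ in front of the initial data. The printed constants lack it; the paper also drops the $2^p$ from \textbf{(G7)} and the powers $c^{\pm(p+1)}$ when passing between $cw_y$ and $w_y$. The factor is genuine. For $\phi_0$ supported in $(0.4,0.6)$ and $w_0=0$, $f=d=0$, d'Alembert's formula gives $\|w_y(\cdot,t)\|_{L^q(0,1)}=\frac{2^{1/q}}{2c}\|\phi_0\|_{L^q(0,1)}$ at $ct=0.2$. For small $c$ this exceeds the printed right-hand sides (take $r=1$ in \eqref{PDE3-ISS-1}, $m=c$ in \eqref{PDE3-ISS-2}).

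\textbf{What your plan actually proves.} With $M:=c\sup_{(0,T)}|d|$ you get the qualitative content: a GISS-LF and ISS in every $L^q$, $q\in[2,\infty]$. The gains are $c$-dependent: roughly $(1+\frac1c)$ on $\|\phi_0\|_{L^q(0,1)}+c\|w_{0y}\|_{L^q(0,1)}$ and on the $f$-term, and $(1+c)$ on $\sup|d|$. State that corrected estimate rather than aiming at \eqref{PDE3-ISS-1}--\eqref{PDE3-ISS-2} verbatim.

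\textbf{Part (ii).} Independently of the above, your parameter choice does not give $q$-uniform constants. With $r=2m/c$ and $\varepsilon=cr/2=m$ fixed, the decay exponent of the $L^q$ estimate is $(cr-\varepsilon)/q=m/q$. This is below $m/2$ for every $q>2$ and tends to $0$. The $f$-gain contains $\big(\frac{q-1}{m}\big)^{\frac{q-1}{q}}$, which grows linearly in $q$. So letting $q\to\infty$ gives nothing, and the $L^\infty$ case is lost. Both parameters must grow with $p=q-1$, as in the paper: $r=\frac{2m}{c}p$ and $\varepsilon=mp$, which still satisfies your relation $\varepsilon=cr/2$. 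Then $cr-\varepsilon=mp$ and the rate is $\frac{mp}{p+1}\ge\frac m2$. Also $e^{\frac{2r}{p+1}}\le e^{\frac{4m}{c}}$ and $\big(\frac{p}{\varepsilon}\big)^{\frac{p}{p+1}}\big(\frac{8e^{2r}}{mp}\big)^{\frac{1}{p+1}}\le\frac{8}{m}e^{\frac{4m}{c}}$. These bounds hold uniformly in $p\ge1$ and are unaffected by the $q$-independent $c$-factors above.
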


\begin{remark} Due to arbitrariness of $r$ in \eqref{PDE3-ISS-1} (or $m$ in \eqref{PDE3-ISS-2}), the disturbance-free system is superstable and hence, finite-time stable, namely, the energy decays to zero in finite time; see \cite{Balakrishnan:1999,Balakrishnan:1996}.
\end{remark}

\begin{proof} We proceed the proof in four steps.

\textbf{Step 1:} Calculate the derivative of $\widehat{V}$.

Indeed, for any $t\in(0,T)$, using integrating by  parts, we obtain
\begin{align}
 \dot{V}_1(w_t+cw_y -M)
=&\int_0^1e^{ r y}g(w_t+cw_y -M)(w_{tt}+cw_{yt})\diff y\notag\\
 =&\int_0^1e^{ r y}g(w_t+cw_y -M)( c^2w_{yy}+f+cw_{yt})\diff y\notag\\
 =&c\int_0^1e^{ r y}g(w_t+cw_y -M)(  w_{t}+c w_{y })_y\diff y+\int_0^1e^{ r y}g(w_t+cw_y -M) f \diff y\notag\\
  =&c\int_0^1e^{ r y}\left(G(w_t+cw_y -M)\right)_y\diff y +\int_0^1e^{ r y}g(w_t+cw_y -M) f \diff y\notag\\
 =&c \left[e^{ r y}G(w_t+cw_y -M)\right]_{y=0}^{y=1} -cr{V}_1(w_t+cw_y -M)\notag\\
 & +\int_0^1e^{ r y}g(w_t+cw_y -M) f \diff y.\label{PDE3-53}
  \end{align}

Using the Young's inequality (see \textbf{(G8)}), we have
\begin{align}
  \int_0^1e^{ r y}g(w_t+cw_y -M) f \diff y
\leq &\varepsilon    \int_0^1e^{ r y}G(w_t+cw_y -M) \diff y\notag\\
 & +\left(\frac{p}{\varepsilon}\right) ^{ p}  \int_0^1e^{ r y}G(|f|) \diff y,\forall  \varepsilon\in \mathbb{R}_{>0}.\label{PDE3-54}
  \end{align}
  We infer from \eqref{PDE3-53} and \eqref{PDE3-54} that
  \begin{align}
 \dot{V}_1(w_t+cw_y -M)
\leq &  -(cr-\varepsilon) {V}_1(w_t+cw_y -M)+\left(\frac{p}{\varepsilon}\right) ^{ p}  V_1(|f|)
  \notag\\
 &  + c \left[e^{ r y}G(w_t+cw_y -M)\right]_{y=0}^{y=1} ,\forall     t\in(0,T).
 \label{PDE3-55}
  \end{align}
  Analogously, we deduce that
  \begin{align}
 \dot{V}_1(-w_t-cw_y -M)
\leq &  -(cr-\varepsilon) {V}_1(-w_t-cw_y -M)+\left(\frac{p}{\varepsilon}\right) ^{ p}  V_1(|f|)\notag\\
 & + c \left[e^{ r y}G(-w_t-cw_y -M)\right]_{y=0}^{y=1} ,\forall    t\in(0,T),
 \label{PDE3-56}
  \end{align}
   \begin{align}
 \dot{V}_2(w_t-cw_y -M)
\leq &  -(cr-\varepsilon) {V}_2(w_t-cw_y -M)+\left(\frac{p}{\varepsilon}\right) ^{ p}  V_2(|f|) \notag\\
 &- c \left[e^{ -r y}G(w_t-cw_y -M)\right]_{y=0}^{y=1} ,\forall    t\in(0,T),
 \label{PDE3-57}
  \end{align}
  and
        \begin{align}
 \dot{V}_2(-w_t+cw_y -M)
\leq &  -(cr-\varepsilon) {V}_2(-w_t+cw_y -M)+\left(\frac{p}{\varepsilon}\right) ^{ p}  V_2(|f|)\notag\\
 & - c \left[e^{ -r y}G(-w_t+cw_y -M)\right]_{y=0}^{y=1} ,\forall    t\in(0,T),
 \label{PDE3-58}
  \end{align}
  where $\varepsilon$ keeps the same value and satisfies
 \begin{align*}
 cr-\varepsilon>0.
 \end{align*}

  Combining \eqref{PDE3-55}, \eqref{PDE3-56}, \eqref{PDE3-57}, and \eqref{PDE3-58}, we obtain
     \begin{align}
 \dot{\widehat{V}}(w)
\leq &  -(cr-\varepsilon) \widehat{V} (w)+2\left(\frac{p}{\varepsilon}\right) ^{ p} \left( V_1(|f|)+V_2(|f|)\right)   +cB(t)
 \notag\\
 \leq &  -(cr-\varepsilon) \widehat{V} (w)+4\left(\frac{p}{\varepsilon}\right) ^{ p}   V_1(|f|)  +cB(t),     \forall    t\in(0,T),
 \label{PDE3-59}
  \end{align}
  where
  \begin{align*}
  B(t):=&
 \left[e^{ r y}G(w_t+cw_y -M)\right]_{y=0}^{y=1}  + \left[e^{ r y}G(-w_t-cw_y -M)\right]_{y=0}^{y=1} -  \left[e^{ -r y}G(w_t-cw_y -M)\right]_{y=0}^{y=1} \notag\\
     &-   \left[e^{ -r y}G(-w_t+cw_y -M)\right]_{y=0}^{y=1} .
  \end{align*}

  \textbf{Step 2:} Handle the boundary term $ B(t)$.

  Using the boundary conditions, the fact that $ck=1$, and the monotonicity of $G$, we have
  \begin{align*}
  B(t)
 =&
 e^{ r }G(w_t(1,t)+cw_y(1,t) -M)    - G(w_t(0,t)+cw_y(0,t) -M) \notag\\
 &+e^{ r }G(-w_t(1,t)-cw_y(1,t) -M)    - G(-w_t(0,t)-cw_y(0,t) -M)\notag\\
 & -  e^{ -r  }G(w_t(1,t)-cw_y(1,t) -M)    + G(w_t(0,t)-cw_y(0,t) -M)
     \notag\\
     &-    e^{ -r  }G(-w_t(1,t)+cw_y(1,t) -M)   +G(-w_t(0,t)+cw_y(0,t) -M)\notag\\
     =& e^{ r }G(w_t(1,t)+c(-kw_t(1,t)+d(t)) -M) - G(w_t(0,t)+cw_y(0,t) -M)\notag\\
& +e^{ r }G(-w_t(1,t)-c(-kw_t(1,t)+d(t)) -M)  - G(-cw_y(0,t) -M)\notag\\
  &-  e^{ -r  }G(w_t(1,t)-c(-kw_t(1,t)+d(t)) -M)  + G(-cw_y(0,t) -M)
     \notag\\
     &-    e^{ -r  }G(-w_t(1,t)+c(-kw_t(1,t)+d(t)) -M)  +G(cw_y(0,t) -M)\notag\\
=&     e^{ r }G( c d(t)  -M)   - G(cw_y(0,t) -M) +e^{ r }G(-c d(t) -M)   - G(-cw_y(0,t) -M) \notag\\
 &+ G(-cw_y(0,t) -M)
      -    e^{ -r  }G(-2w_t(1,t)+c d(t) -M)  +G(cw_y(0,t) -M)\notag\\
=&     e^{ r }G( c d(t)  -M)    +e^{ r }G(-c d(t) -M)    -  e^{ -r  }G(2w_t(1,t)-c d(t) -M) \notag\\
 & -    e^{ -r  }G(-2w_t(1,t)+c d(t) -M) \notag\\
\leq &  e^{ r }G( c d(t)  -M)    +e^{ r }G(-c d(t) -M)   \notag\\
\leq &  e^{ r }G( c |d(t)|  -M)    +e^{ r }G(|-c d(t)| -M)   \notag\\
\leq &  e^{ r }G( M  -M)    +e^{ r }G(M -M)   \notag\\
=& 0,\forall t\in (0,T).
  \end{align*}

  \textbf{Step 3:} Establish the ISS estimate \eqref{PDE3-ISS-1}.

  Since $B(t)\leq 0$ for all $t\in (0,T)$, we deduce from \eqref{PDE3-59} that
     \begin{align}
 \dot{\widehat{V}}(w)
\leq &  -(cr-\varepsilon) \widehat{V} (w) +4\left(\frac{p}{\varepsilon}\right) ^{ p}   V_1(|f|)   \notag\\
\leq & -(cr-\varepsilon) \widehat{V} (w) +4e^{ r} \left(\frac{p}{\varepsilon}\right) ^{ p}\frac{1}{p+1}\|f\|_{\infty,T}^{p+1}  ,
      \forall    t\in(0,T),
  \label{PDE3-61}
  \end{align}
  where, and in the sequel,
  \begin{align*}
  \|f\|_{\infty,T}:=\sup_{(y,t)\in (0,1)\times (0,T)}|f(y,t)|.
  \end{align*}

  Applying the Gronwall's inequality to \eqref{PDE3-61}, we obtain
  \begin{align}
  \widehat{V} (w(y,T))
\leq &  e^{-(cr-\varepsilon)T} \widehat{V} (w(y,0))  + \frac{4e^{ r}}{p+1}\left(\frac{p}{\varepsilon}\right) ^{ p}\|f\|_{\infty,T}^{p+1}\int_0^Te^{-(cr-\varepsilon)(T-t)}   \diff t\notag\\
\leq &e^{-(cr-\varepsilon)T} \widehat{V} (w(y,0)) +\frac{4e^{ r}}{p+1} \left(\frac{p}{\varepsilon}\right) ^{ p} \frac{1}{cr-\varepsilon} \|f\|_{\infty,T}^{p+1}.
  \label{PDE3-62}
  \end{align}
 Using the definition  of  $\widehat{V}$, \textbf{(G7)}, and  \textbf{(G6)}, we infer from \eqref{PDE3-62} that
 \begin{align}
   &\frac{1}{p+1}\left(\|w_t(\cdot,T)\|^{p+1}_{L^{p+1}(0,1)}+\|w_y(\cdot,T)\|^{p+1}_{L^{p+1}(0,1)}\right) \notag\\
 =&
\int_0^1G(|w_t(y,T)|)\diff y+\int_0^1G(|w_y(y,T)|)\diff y\notag\\
 \leq &   e^r\widehat{V} (w(y,T))+2^{p}G(M)+e^r\widehat{V} (w(y,T))+2^{p}G(M)\notag\\
\leq &2 e^{r-(cr-\varepsilon)T} \widehat{V} (w(y,0))  +\frac{8e^{ 2r}}{p+1} \left(\frac{p}{\varepsilon}\right) ^{ p} \frac{1}{cr-\varepsilon} \|f\|_{\infty,T}^{p+1} +2^{p+1}G(M)\notag\\
= &2 e^{r-(cr-\varepsilon)T}    V_1(|\phi_0(y)+cw_{0y}(y)|-M)    
+2 e^{r-(cr-\varepsilon)T} V_2(|\phi_0(y)+cw_{0y}(y)|-M)
   \notag\\
  &+\frac{8e^{ 2r}}{p+1} \left(\frac{p}{\varepsilon}\right) ^{ p} \frac{1}{cr-\varepsilon} \|f\|_{\infty,T}^{p+1} +2^{p+1}G(M)\notag\\
  \leq & 2 e^{r-(cr-\varepsilon)T}    V_1(|\phi_0(y)+cw_{0y}(y)| )  
+2 e^{r-(cr-\varepsilon)T} V_2(|\phi_0(y)+cw_{0y}(y)| )
    \notag\\
 &+\frac{8e^{ 2r}}{p+1} \left(\frac{p}{\varepsilon}\right) ^{ p} \frac{1}{cr-\varepsilon} \|f\|_{\infty,T}^{p+1} +2^{p+1}G(M)\notag\\
  \leq & 2 e^{r-(cr-\varepsilon)T} 2^{p} (  V_1(|\phi_0(y)|)+V_1(|cw_{0y}(y)| )   ) 
+2 e^{r-(cr-\varepsilon)T} 2^{p} (  V_2(|\phi_0(y)|)+V_2(|cw_{0y}(y)| )   )
   \notag\\
  &+\frac{8e^{ 2r}}{p+1} \left(\frac{p}{\varepsilon}\right) ^{ p} \frac{1}{cr-\varepsilon} \|f\|_{\infty,T}^{p+1} +2^{p+1}G(M)\notag\\
  \leq &   2^{p+2}e^{2r-(cr-\varepsilon)T}    \int_0^1 G(|\phi_0(y)| )\diff y +  2^{p+2}c^{p+1}e^{2r-(cr-\varepsilon)T}\int_0^1 G(|w_{0y}(y)| )\diff y    \notag\\
   &+\frac{8e^{ 2r}}{p+1} \left(\frac{p}{\varepsilon}\right) ^{ p} \frac{1}{cr-\varepsilon} \|f\|_{\infty,T}^{p+1}
  +2^{p+1}G(M)\notag\\
  =&   \frac{2^{p+2}}{p+1}e^{2r-(cr-\varepsilon)T}\left(\|\phi_0\|^{p+1}_{L^{p+1}(0,1)}+\|w_{0y}\|^{p+1}_{L^{p+1}(0,1)}\right)
      +\frac{8e^{ 2r}}{p+1} \left(\frac{p}{\varepsilon}\right) ^{ p} \frac{1}{cr-\varepsilon} \|f\|_{\infty,T}^{p+1}
  \notag\\
 &+\frac{2^{p+1}}{p+1}M^{p+1},
  \end{align}
  which implies  that system~\eqref{second hyperbolic PDE} is ISS    in the $L^{p+1}$-norm, having the estimate
  \begin{align}
  \|w_t(\cdot,T)\|_{L^{p+1}(0,1)}+\|w_y(\cdot,T)\|_{L^{p+1}(0,1)}
 \leq& 2^{\frac{p}{p+1}} \bigg[   2^{\frac{p+2}{p+1} } e^{\frac{2r-(cr-\varepsilon)T}{p+1}}\left(\|\phi_0\|_{L^{p+1}(0,1)}+\|w_{0y}\|_{L^{p+1}(0,1)}\right)\notag\\
     &+   \left(\frac{p}{\varepsilon}\right) ^{ \frac{p}{p+1}} \left(\frac{8e^{2r}}{cr-\varepsilon}\right)^{\frac{1}{p+1}} \|f\|_{\infty,T}
  +   2M \bigg],\forall T\in \mathbb{R}_{>0}.\label{PDE3-ISS-1'}
  \end{align}

In addition, due to the arbitrariness of $p$, we conclude that \eqref{PDE3-ISS-1} holds true for all $q\in [2,\infty)$.

  \textbf{Step 4:} Establish the ISS estimate \eqref{PDE3-ISS-2}.

   For any given $m>0$, letting $r=\frac{2m}{c}p$ and $\varepsilon=mp$  in \eqref{PDE3-ISS-1'} yields
    \begin{align*}
     &\|w_t(\cdot,T)\|_{L^{p+1}(0,1)}+\|w_y(\cdot,T)\|_{L^{p+1}(0,1)}\\
 \leq& 2^{\frac{p}{p+1}} \bigg[   2^{\frac{p+2}{p+1} }  e^{\frac{4m}{c}\frac{p}{p+1}}e^{-\frac{p}{p+1}mT}\left(\|\phi_0\|_{L^{p+1}(0,1)}\right. \left. +\|w_{0y}\|_{L^{p+1}(0,1)}\right)\notag\\
     &+ \left(\frac{1}{m}\right)^{\frac{p}{p+1}}\left( \frac{8}{mp}\right)^{\frac{1}{p+1}} e^{\frac{4m}{c}\frac{p}{p+1}}    \|f\|_{\infty,T}
  +   2M \bigg] \notag\\
  \leq &
 2  \bigg[   4 e^{\frac{4m}{c} }e^{-\frac{1}{2}mT}\left(\|\phi_0\|_{L^{p+1}(0,1)} +\|w_{0y}\|_{L^{p+1}(0,1)}\right)  +  \left(\frac{1}{m}\right)^{\frac{p}{p+1}}\left( \frac{8}{m}\right)^{\frac{1}{p+1}}  e^{\frac{4m}{c} }    \|f\|_{\infty,T}
  +   2M \bigg] \notag\\
  = &8 e^{\frac{4m}{c} }e^{-\frac{1}{2}mT}\left(\|\phi_0\|_{L^{p+1}(0,1)} +\|w_{0y}\|_{L^{p+1}(0,1)}\right)+    \frac{16}{m}  e^{\frac{4m}{c} }    \|f\|_{\infty,T}
  +   4M
  ,\forall T\in \mathbb{R}_{>0},
  \end{align*}
  which, along with the arbitrariness of $p>1$, implies   \eqref{PDE3-ISS-2}.
  \end{proof}
\begin{remark} It is worth noting that when using the GLM for ISS analysis, handling each boundary term in
$
B(t)$ independently would make it difficult to obtain effective estimates. Instead, as shown in Step 2, to eliminate the impact of boundary disturbances, all boundary terms in
$
B(t)$ must be treated as a whole in a unified manner.

\end{remark}

\section{Conclusion}\label{sec:conclusion}

In this paper, within the framework of abstract control systems, we have revisited the concept of (classical) ISS-LF, the (classical) ISS-Lyapunov theorem, as well as the notion of GISS-LF and the GISS-Lyapunov theorem. A distinctive feature of the GISS-LF is its dependence on the external inputs, which generalizes the classical ISS-LF and provides greater flexibility for the ISS analysis of concrete PDEs. In particular, in the presence of  Dirichlet boundary disturbances, the GISS-LF demonstrates superior  efficiency than the classical ISS-LF. We refer to this approach of ISS analysis via the construction of GISS-LFs as the generalized Lyapunov method (GLM). To illustrate the general applicability of GLM, we have shown in detail how to construct GLFs for three  representative examples, namely, an $N$-dimensional parabolic PDE, a first order hyperbolic PDE, and a wave equation with boundary damping, and based on which,  established the corresponding ISS estimates in  general $L^q$ spaces. These examples cover the main features of parabolic, first order hyperbolic, and second order hyperbolic equations, providing a validation of the applicability of the GLM to different classes of PDEs.
   It is worth noting that, due to the diversity of PDEs, constructing a universally  applicable GISS-LF in an explicit form for general PDEs remains an open problem.
    In future work, we will continue to focus on developing GISS-LFs tailored to a wider range of specific PDEs,
  explore the intrinsic connection between GISS-LF and controller design, particularly in the context of observer design and  output feedback control.

  %
%
%
%

 \appendix

 \section{Technical lemmas}

\begin{lemma}[Young's inequality with $\epsilon$, {\cite[p.~622]{Evans:2010}}]\label{Young's inequality} Let $r,q\in(1, \infty)$ satisfy $\frac{1}{r}+\frac{1}{q}=1$. It holds that
\begin{align*}
 ab\leq \epsilon a^r+C(\epsilon)b^q,\forall a,b\in \mathbb{R}_{\geq 0},\forall \epsilon\in \mathbb{R}_{>0},
\end{align*}
where $C(\varepsilon):=  (\epsilon r)^{-\frac{q}{r}} q^{-1} $.
\end{lemma}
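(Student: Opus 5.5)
The statement to prove is the weighted Young inequality
\begin{align*}
ab\leq \epsilon a^r+C(\epsilon)b^q,\qquad C(\epsilon)=(\epsilon r)^{-\frac{q}{r}}q^{-1},
\end{align*}
for all $a,b\geq 0$ and $\epsilon>0$. The plan is to derive it from the unweighted Young inequality $xy\leq \frac{x^r}{r}+\frac{y^q}{q}$, valid for $x,y\geq 0$ and conjugate exponents $\frac1r+\frac1q=1$, by a rescaling of the two factors.

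\textbf{Step 1 (unweighted inequality).} The trivial cases $a=0$ or $b=0$ hold immediately. For $a,b>0$ I will use the concavity of $\ln$ on $\mathbb{R}_{>0}$. Since $\frac1r+\frac1q=1$,
\begin{align*}
\ln\Big(\tfrac1r x^r+\tfrac1q y^q\Big)\geq \tfrac1r\ln x^r+\tfrac1q\ln y^q=\ln(xy).
\end{align*}
Exponentiating gives $xy\leq \frac{x^r}{r}+\frac{y^q}{q}$.

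\textbf{Step 2 (rescaling).} Write $ab=\big((\epsilon r)^{1/r}a\big)\big((\epsilon r)^{-1/r}b\big)$ and apply Step 1 with $x:=(\epsilon r)^{1/r}a$ and $y:=(\epsilon r)^{-1/r}b$. This gives
\begin{align*}
ab\leq \frac{\epsilon r\,a^r}{r}+\frac{(\epsilon r)^{-q/r}b^q}{q}=\epsilon a^r+(\epsilon r)^{-\frac qr}q^{-1}b^q.
\end{align*}
This is exactly the stated constant $C(\epsilon)$.

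\textbf{Main obstacle.} There is no genuine difficulty here. The only step needing care is choosing the scaling exponent $1/r$ on $\epsilon r$, so that the coefficient of $a^r$ comes out exactly $\epsilon$. Once that is fixed, the exponent $-q/r$ in $C(\epsilon)$ follows automatically.
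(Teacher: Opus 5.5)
Your proof is correct and takes essentially the same approach as the paper. The paper gives no proof of its own and cites Evans (p.~622), whose argument is the one you use: the unweighted inequality $xy\leq \frac{x^r}{r}+\frac{y^q}{q}$ from convexity of $e^{x}$ (equivalent to your concavity of $\ln$), followed by the same rescaling $ab=\big((\epsilon r)^{1/r}a\big)\big((\epsilon r)^{-1/r}b\big)$, which gives exactly $C(\epsilon)=(\epsilon r)^{-q/r}q^{-1}$.
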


\begin{lemma}[Gronwall's inequality, {\cite[Page 708]{Evans:2010}}]  \label{Gronwall}
For $T\in\mathbb{R}_{>0}$, let $\phi$ and $\psi$ be integrable functions on $[0,T]$, and  $\eta$  {an absolutely} continuous function on $[0,T]$. If $\eta$ satisfies {the} differential inequality for all most every $t\in [0,T]$:
\begin{align*}
\eta'(t)\leq \phi (t)\eta(t)+\psi(t),
\end{align*}
  then, it holds that
\begin{align*}
\eta(t)\leq e^{\int_{0}^t\phi(s)\text{d}s} \eta(0)+\int_{0}^t  e^{ \int_{s}^t\phi(r)\text{d}r}\psi(s)\diff s , \forall t\in [0,T].
\end{align*}
\end{lemma}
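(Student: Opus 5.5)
We prove the lemma with the standard integrating-factor argument, carried out in the absolutely continuous setting. Set
\begin{align*}
\Phi(t):=\int_0^t\phi(s)\diff s,\qquad \zeta(t):=e^{-\Phi(t)}\eta(t),\qquad t\in[0,T].
\end{align*}
The aim is to show that $\zeta$ is absolutely continuous and that, for almost every $t$,
\begin{align*}
\zeta'(t)=e^{-\Phi(t)}\left(\eta'(t)-\phi(t)\eta(t)\right)\leq e^{-\Phi(t)}\psi(t).
\end{align*}
Integrating this from $0$ to $t$ then gives the claim.

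\textbf{Step 1: regularity.} Since $\phi\in L^1(0,T)$, the function $\Phi$ is absolutely continuous on $[0,T]$ and $\Phi'=\phi$ almost everywhere (Lebesgue differentiation theorem). Hence $\Phi$ is bounded, say $|\Phi|\leq\|\phi\|_{L^1(0,T)}$.
\begin{itemize}
\item The exponential is Lipschitz on the bounded range of $-\Phi$, so $e^{-\Phi}$ is absolutely continuous, with derivative $-\phi e^{-\Phi}$ almost everywhere by the chain rule for Lipschitz$\circ$AC.
\item The product of two absolutely continuous functions on a compact interval is absolutely continuous, and the product rule holds almost everywhere. Hence $\zeta$ is absolutely continuous and its derivative is as displayed above.
\item $\eta$ is continuous on $[0,T]$, hence bounded, so $\phi\eta\in L^1(0,T)$.
\item Since $e^{-\Phi}$ is bounded and $\psi\in L^1(0,T)$, we also have $e^{-\Phi}\psi\in L^1(0,T)$.
\end{itemize}
All integrals below are therefore finite.

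\textbf{Step 2: integration.} The differential inequality holds almost everywhere and $e^{-\Phi}>0$, so $\zeta'\leq e^{-\Phi}\psi$ almost everywhere. By the fundamental theorem of calculus for absolutely continuous functions, $\zeta(t)-\zeta(0)=\int_0^t\zeta'(s)\diff s$, and hence
\begin{align*}
e^{-\Phi(t)}\eta(t)-\eta(0)\leq\int_0^t e^{-\Phi(s)}\psi(s)\diff s .
\end{align*}
Multiply both sides by $e^{\Phi(t)}>0$ and use $\Phi(t)-\Phi(s)=\int_s^t\phi(r)\diff r$. This yields
\begin{align*}
\eta(t)\leq e^{\int_0^t\phi(s)\diff s}\eta(0)+\int_0^t e^{\int_s^t\phi(r)\diff r}\psi(s)\diff s,\qquad t\in[0,T],
\end{align*}
which is the desired estimate.

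\textbf{Main difficulty.} The only delicate point is Step 1. One must justify that $e^{-\Phi}\eta$ is absolutely continuous and obeys the product and chain rules almost everywhere, even though $\phi$ is merely integrable and not continuous. This is what allows the fundamental theorem of calculus to be applied to $\zeta$ rather than assuming classical differentiability. Once this is in place, the rest is the elementary computation above.
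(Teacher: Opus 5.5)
Your proof is correct and follows the same route as the source the paper cites: the paper gives no proof of its own and defers to Evans, whose argument likewise differentiates $\eta(s)e^{-\int_0^s\phi(r)\,\mathrm{d}r}$ and integrates. Evans states the lemma for nonnegative $\eta,\phi,\psi$ and ends with the cruder bound $e^{\int_0^t\phi}\bigl[\eta(0)+\int_0^t\psi\bigr]$; your argument stops before using $e^{-\int_0^s\phi}\le 1$, so it gives exactly the sign-free, sharper form stated in the paper.
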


\section{Proofs of properties \textbf{(G1)-\textbf{(G8)}}}\label{Appendix: G}
Indeed, properties \textbf{(G1)}-\textbf{(G3)} can be directly verified via the definition of $g$ and $G$.

Using the increasing monotonicity of $G$, i.e., \textbf{(G3)},  we get $G(s)\leq G(s+\tau)$ for $\tau\geq 0$ and $G(s)\leq G(s-\tau)$ for $\tau\leq 0$, respectively. Thus, \textbf{(G4)} holds true.

Since $G(|s|+\tau)= G(s+\tau)$ for $s\geq 0$ and $G(|s|+\tau)= G(-s+\tau)$ for $s\leq 0$, we obtain \textbf{(G5)}.

 For \textbf{(G6)}, it suffices to note that    $(s+\tau)^{p+1}\leq 2^{p}(s^{p+1}+\tau^{p+1})$  for   $s,\tau\in \mathbb{R}_{\geq 0}$.

By subsequently using  \textbf{(G6)}, \textbf{(G5)}, and \textbf{(G4)}, we have
\begin{align*}G(|s|)\leq &2^pG(|s|-M)+2^pG(M)\notag\\
= &2^p(G(s-M)+G(-s-M))+2^pG(M)\notag\\
\leq & 2^p (G(s+\tau-M)+G(s-\tau-M) +G(-s+\tau-M)+G(-s-\tau-M)) \notag\\
 &+2^pG(M) ,\forall s,\tau,M\in\mathbb{R}.
\end{align*}
Therefore, \textbf{(G7)} holds true.

For \textbf{(G8)}, it suffices to prove the result for positive $s,\tau$. Indeed, for $s,\tau\in\mathbb{R}_{>0} $,
setting $a:=g(s)=s^{p}$, $b:=\tau$, $r:=\frac{p+1}{p}$, $q:=p+1$, $\epsilon :=\frac{\varepsilon}{p+1} $ in Lemma~\ref{Young's inequality}  and using \textbf{(G2)}, we obtain
\begin{align*}
 g(s)\tau\leq &\frac{1}{p+1}\varepsilon (s^{p})^{\frac{p+1}{p}}+C(\epsilon)\tau^{p+1}\notag\\
 =&  \frac{\varepsilon}{p+1}s^{p+1} +\left(\frac{\varepsilon}{p+1}  \frac{p+1}{p}\right)^{-\frac{p+1}{\ \frac{p+1}{p}\ }} \frac{1}{p+1}\tau^{p+1}\notag\\
 =&  \varepsilon  G(s)+\left(\frac{p}{\varepsilon}\right) ^{ p}G(\tau) .
 \end{align*}
Therefore, \textbf{(G8)} holds true.

\vspace{2cm}

\begin{tabular}{ll}
Jun Zheng & Guchuan Zhu \\
School of Mathematics & Department of Electrical Engineering \\
Southwest Jiaotong University &  Polytechnique
                Montr\'{e}al\\
 Chengdu,   611756,  China&  
                Montreal,  H3T 1J4, Canada\\
{\tt zhengjun2014@aliyun.com} & {\tt guchuan.zhu@polymtl.ca}
\end{tabular}

\end{document}